\newtheorem{theorem}{Theorem}
\newtheorem{proposition}[theorem]{Proposition}
\newtheorem{lemma}[theorem]{Lemma}
\newtheorem{corollary}[theorem]{Corollary}
\newtheorem{example}[theorem]{Example}
\newtheorem{definition}[theorem]{Definition}
\newtheorem{remark}[theorem]{Remark}
\newcommand{\R}{\mathbb{R}}
\newcommand{\T}{\mathbb{T}}
\newcommand{\g}{\mathcal{L}}
\newcommand{\Ga}{\gamma_1}
\newcommand{\Gb}{\gamma_2}
\begin{document}

\title{Delta-Nabla Optimal Control Problems\footnote{Preprint version 
of an article submitted 28-Nov-2009; revised 02-Jul-2010;
accepted 20-Jul-2010; for publication in \emph{Journal of Vibration and Control}.
This work was carried out at the University of Aveiro via
the FCT post-doc fellowship SFRH/BPD/48439/2008 (Girejko);
a project of the Polish Ministry of Science and Higher Education
``Wsparcie miedzynarodowej mobilnosci naukowcow'' (Malinowska);
and the project Portugal--Austin UTAustin/MAT/0057/2008 (Torres).
The good working conditions at the University of Aveiro
and the partial support of CIDMA are here gratefully acknowledged.}}

\author{EWA GIREJKO$^{1}$\\ \texttt{egirejko@ua.pt}
\and AGNIESZKA B. MALINOWSKA$^{1}$\\ \texttt{abmalinowska@ua.pt}
\and DELFIM F. M. TORRES$^{2}$\footnote{Corresponding author.}\\ \texttt{delfim@ua.pt}}

\date{$^1$Faculty of Computer Science\\
Bia{\l}ystok University of Technology\\
15-351 Bia\l ystok, Poland\\[0.3cm]
$^2$Department of Mathematics\\
University of Aveiro\\
3810-193 Aveiro, Portugal}

\maketitle

% -------------------------------------------

\begin{abstract}
We present a unified treatment to control problems
on an arbitrary time scale by introducing the study
of forward-backward optimal control problems.
Necessary optimality conditions for delta-nabla isoperimetric problems
are proved, and previous results in the literature
obtained as particular cases. As an application of the results
of the paper we give necessary and sufficient Pareto optimality
conditions for delta-nabla bi-objective optimal control problems.
\end{abstract}

% -------------------------------------------

\noindent {\bf Keywords}: {\it optimal control; isoperimetric problems;
Pareto optimality; time scales.}

\noindent \textbf{2010 Mathematics Subject Classification:} 49K05, 26E70, 34N05.

% -------------------------------------------

\section{INTRODUCTION}

In order to deal with non-traditional applications in areas such as medicine,
economics, or engineering, where the system dynamics are described
on a time scale partly continuous and partly discrete,
or to accommodate non-uniform sampled systems, one needs to work with systems
defined on a so called time scale -- see, \textrm{e.g.}, [\citet{A:B:L:06}],
[\citet{A:U:08}], [\citet{Basia:post_doc_Aveiro:2}].
The optimal control theory on time scales was introduced in the beginning
of the XXI century in the simpler framework of the calculus of variations,
and is now a fertile area of research
in control and engineering [\citet{SSW}], [\citet{MyID:183}].
In the literature there are two different approaches
to the problems of optimal control on time scales: some authors use the delta calculus
[\citet{B:04}], [\citet{des:ts:cv}], [\citet{B:T:08}], [\citet{F:T:08}],
[\citet{comNatyBasia:infHorizon}], [\citet{AM:T:W}],
while others prefer the nabla methodology [\citet{A:T}], [\citet{A:B:L:06}],
[\citet{A:U:08}], [\citet{NM:T}].
In this paper we propose a simple and effective unification
of the delta and nabla approaches of optimal control on time scales.
More precisely, we consider the problem of minimizing or maximizing
a delta-nabla cost integral functional
\begin{equation}
\label{eq:0}
\mathcal{L}(y) = \gamma_1\int_a^b L_{\Delta}\left(t,y^\sigma(t),y^\Delta(t)\right) \Delta t
+ \gamma_2\int_a^b L_{\nabla}\left(t,y^\rho(t),y^\nabla(t)\right) \nabla t
\end{equation}
subject to given boundary conditions and an isoperimetric constraint
of the form
\begin{equation}
\label{eq:1}
\mathcal{K}(y) = k_1\int_a^b K_{\Delta}\left(t,y^\sigma(t),y^\Delta(t)\right) \Delta t
+k_2\int_a^b K_{\nabla}\left(t,y^\rho(t),y^\nabla(t)\right) \nabla t=k \, .
\end{equation}
Main results include Euler-Lagrange necessary optimality type conditions
for delta-nabla isoperimetric problems \eqref{eq:0}--\eqref{eq:1}
(see Section~\ref{isoperimetric}).
Isoperimetric problems have found a broad class of
important applications throughout the centuries.
Concrete isoperimetric problems in engineering have been
investigated by a number of authors -- \textrm{cf.}
[\citet{A:T:2}], [\citet{Curtis}], and references therein.
Here, as an application of our results, we obtain the recent results of
[\citet{A:T}], [\citet{A:B:L:06}], [\citet{B:04}], and [\citet{F:T:10}]
as straightforward corollaries.
In Section~\ref{bi-objective} we consider delta-nabla bi-objective problems.
Our more general approach to optimal control in terms of the delta-nabla
problem \eqref{eq:0}--\eqref{eq:1}  allows to obtain necessary
and sufficient conditions for Pareto optimality.
The results of the paper are illustrated by several examples.

% ---------------------------------------------------

\section{PRELIMINARIES}
\label{sec:prelim}

We assume the reader to be familiar with the calculus on time scales.
For an introduction to the subject we refer to
the seminal papers [\citet{A:H}] and [\citet{SH}],
the nice survey [\citet{A:B:O:P:02}], and the books
[\citet{B:P:01}], [\citet{B:P:03}], and [\citet{Lak:book}].

Throughout the whole paper we assume $\T$ to be a given time scale with $a, b \in \T$,
$a < b$, and we set $I:=[a,b]\cap\T$ for $[a,b]\subset\R$. Moreover, we define
$I^{\kappa}_{\kappa}:=I^{\kappa}\cap I_{\kappa}$ with the standard
notations $I^{\kappa} = I \setminus (\rho(b),b]$
and $I_{\kappa} = I \setminus [a,\sigma(a))$.

We recall some necessary results.
If $y$ is delta differentiable at $t\in\T$, then $y^\sigma(t) = y(t) +
\mu(t) y^\Delta(t)$; if $y$ is nabla differentiable at $t$, then
$y^\rho(t) = y(t) - \nu(t) y^\nabla(t)$.
If the functions $f,g : \mathbb{T}\rightarrow\mathbb{R}$ are delta
and nabla differentiable with continuous derivatives, then the
following formulas of integration by parts hold:
\begin{equation}
\label{intBP}
\begin{split}
\int_{a}^{b}f^\sigma(t) g^{\Delta}(t)\Delta t
&=\left.(fg)(t)\right|_{t=a}^{t=b}
-\int_{a}^{b}f^{\Delta}(t)g(t)\Delta t \, , \\
\int_{a}^{b}f(t)g^{\Delta}(t)\Delta t
&=\left.(fg)(t)\right|_{t=a}^{t=b}
-\int_{a}^{b}f^{\Delta}(t)g^\sigma(t)\Delta t \, , \\
\int_{a}^{b}f^\rho(t)g^{\nabla}(t)\nabla t
&=\left.(fg)(t)\right|_{t=a}^{t=b}
-\int_{a}^{b}f^{\nabla}(t)g(t)\nabla t \, ,\\
\int_{a}^{b}f(t)g^{\nabla}(t)\nabla t
&=\left.(fg)(t)\right|_{t=a}^{t=b}
-\int_{a}^{b}f^{\nabla}(t)g^\rho(t)\nabla t \, .
\end{split}
\end{equation}

The following fundamental lemma of the calculus of variations on
time scales, involving a nabla derivative and a nabla integral,
was proved in [\citet{NM:T}].

\begin{lemma}{\rm (The nabla Dubois-Reymond lemma -- \textrm{cf.} Lemma~14 of [\citet{NM:T}])}
\label{DBRL:n} Let $f \in C_{\textrm{ld}}(I, \mathbb{R})$. If
$$
\int_{a}^{b} f(t)\eta^{\nabla}(t)\nabla t=0 \quad {\mbox for \ all}
\quad \eta \in C_{\textrm{ld}}^1(I, \mathbb{R}) \quad \mbox{ such \
that}\quad  \eta(a)=\eta(b)=0 \, ,
$$
then $f(t) \equiv c$ for all $t\in I_\kappa$, where $c$ is a
constant.
\end{lemma}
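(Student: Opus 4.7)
The plan is to use the classical Dubois--Reymond trick, adapted to the nabla setting: construct a specific admissible test function $\eta$ from $f$ itself so that the hypothesis forces a squared integral to vanish.

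First I would define the candidate constant
$c := \frac{1}{b-a}\int_a^b f(\tau)\,\nabla\tau,$
which is the natural ``nabla average'' of $f$ over $I$. Then I would set
$\eta(t) := \int_a^t \bigl(f(\tau)-c\bigr)\,\nabla\tau.$
The next step is to verify that this $\eta$ is admissible. Since $f$ is ld-continuous, so is $f-c$, and therefore the nabla antiderivative $\eta$ is well-defined, nabla-differentiable on $I_\kappa$ with $\eta^\nabla(t) = f(t)-c$, and this derivative is ld-continuous, so $\eta \in C^1_{\textrm{ld}}(I,\mathbb{R})$. The boundary conditions hold: $\eta(a)=0$ by construction, and $\eta(b) = \int_a^b f(\tau)\,\nabla\tau - c(b-a) = 0$ by the choice of $c$.

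Next I would plug this $\eta$ into the hypothesis. Using the hypothesis and the fact that $\int_a^b c\,\eta^\nabla(t)\,\nabla t = c\bigl(\eta(b)-\eta(a)\bigr) = 0$, we get
\[
0 = \int_a^b f(t)\,\eta^\nabla(t)\,\nabla t
= \int_a^b \bigl(f(t)-c\bigr)\eta^\nabla(t)\,\nabla t
= \int_a^b \bigl(f(t)-c\bigr)^2\,\nabla t.
\]
Since $(f-c)^2$ is ld-continuous and nonnegative on $I_\kappa$, the vanishing of its nabla integral forces $(f(t)-c)^2 = 0$ for every $t\in I_\kappa$; hence $f(t)\equiv c$ on $I_\kappa$, which is exactly the claim.

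The only genuinely delicate point is the very last implication, namely that a nonnegative ld-continuous function whose nabla integral over $[a,b]$ is zero must vanish at every point of $I_\kappa$. The argument is standard but requires separating the two cases at each $t\in I_\kappa$: at left-scattered points the nabla integral picks up $\nu(t)(f(t)-c)^2 > 0$ if $f(t)\neq c$, a contradiction; at left-dense points one uses ld-continuity to find a left neighborhood where $(f-c)^2$ is bounded below by a positive constant, again contradicting the vanishing of the integral. This step, rather than the construction of $\eta$, is where I expect the main (albeit routine) technical care to be needed.
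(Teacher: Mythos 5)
Your proof is correct and is essentially the standard argument: the paper itself does not prove this lemma but cites Lemma~14 of Martins and Torres (2009), whose proof uses exactly your construction $\eta(t)=\int_a^t\bigl(f(\tau)-c\bigr)\nabla\tau$ with $c$ the nabla mean value, leading to $\int_a^b\bigl(f(t)-c\bigr)^2\nabla t=0$. Your added care at the last step (splitting into left-scattered and left-dense points of $I_\kappa$) is exactly the right way to justify that the vanishing of the integral of a nonnegative ld-continuous function forces it to vanish on $I_\kappa$.
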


Lemma~\ref{DBRL:d} is the analogous delta version of
Lemma~\ref{DBRL:n}.

\begin{lemma}{\rm (The delta Dubois-Reymond lemma -- \textrm{cf.} Lemma~4.1 of [\citet{B:04}])}
\label{DBRL:d} Let $g\in C_{\textrm{rd}}(I, \mathbb{R})$. If
$$\int_{a}^{b}g(t) \eta^\Delta(t)\Delta t=0  \quad
\mbox{for all $\eta\in C_{\textrm{rd}}^1(I, \mathbb{R})$ such that
$\eta(a)=\eta(b)=0$,}$$ then $g(t)\equiv c \mbox{ on $I^\kappa$
for some $c\in\mathbb \R$}.$
\end{lemma}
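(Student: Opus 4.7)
My plan is to mimic the standard Dubois--Reymond argument, adapted to the delta calculus: I will manufacture a specific admissible test function $\eta$ from $g$ itself, so that the hypothesis forces the $L^2$-norm of $g-c$ to vanish for a cleverly chosen constant $c$.

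Concretely, I would first single out the constant
\[
c := \frac{1}{b-a}\int_{a}^{b} g(t)\,\Delta t,
\]
which is the unique value making $\int_{a}^{b}(g(t)-c)\,\Delta t = 0$. Then I would define the candidate test function
\[
\eta(t) := \int_{a}^{t}\bigl(g(s)-c\bigr)\,\Delta s, \qquad t \in I.
\]
Because $g$ is rd-continuous, so is $g-c$, and hence $\eta$ is delta-differentiable on $I^{\kappa}$ with $\eta^{\Delta}(t) = g(t)-c$; this derivative is itself rd-continuous, so $\eta \in C^{1}_{\textrm{rd}}(I,\mathbb{R})$. The boundary conditions $\eta(a)=0$ and $\eta(b)=0$ follow directly, the latter precisely from the choice of $c$. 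Thus $\eta$ is an admissible competitor in the hypothesis.

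Substituting $\eta^{\Delta} = g-c$ into the assumed identity gives
\[
0 = \int_{a}^{b} g(t)\,\eta^{\Delta}(t)\,\Delta t
= \int_{a}^{b} g(t)\bigl(g(t)-c\bigr)\,\Delta t.
\]
Since $\int_{a}^{b} c\,(g(t)-c)\,\Delta t = c\cdot 0 = 0$ by the defining property of $c$, I can subtract this from the previous line to obtain
\[
0 = \int_{a}^{b}\bigl(g(t)-c\bigr)^{2}\,\Delta t.
\]

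The remaining step, and the only place requiring a bit of care, is to conclude from the vanishing of this delta integral of a non-negative rd-continuous function that its integrand is identically zero on $I^{\kappa}$. This is the only real obstacle: it is the time-scale analogue of the ``continuous non-negative function with zero integral is zero'' fact, and it holds for rd-continuous integrands precisely on $I^{\kappa}$ (at a left-scattered point $t$, the delta integral sees the value at $t$ weighted by $\mu(t)>0$, so the pointwise vanishing on $I^{\kappa}$ is forced). Applying this standard fact to $(g-c)^{2}\in C_{\textrm{rd}}(I,\mathbb{R})$ yields $g(t)\equiv c$ on $I^{\kappa}$, which is exactly the conclusion of the lemma.
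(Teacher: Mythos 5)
The paper does not prove this lemma at all: it is imported as a known result (Lemma~4.1 of [\citet{B:04}]), so there is no internal proof to compare against. Your argument is precisely the standard Dubois--Reymond proof from that cited source --- choose $c$ as the mean value of $g$, test with $\eta(t)=\int_a^t\bigl(g(s)-c\bigr)\Delta s$, which is admissible exactly because of the choice of $c$, and deduce $\int_a^b\bigl(g(t)-c\bigr)^2\Delta t=0$, whence $g\equiv c$ on $I^{\kappa}$ by the vanishing property of non-negative rd-continuous integrands --- and it is correct. One cosmetic slip in your closing parenthesis: the delta integral charges the \emph{right}-scattered points, with weight $\mu(t)=\sigma(t)-t>0$ (left-scattered points are the ones seen by the nabla integral, with weight $\nu(t)$); with that wording fixed, your justification of the last step is the standard one and the argument stands. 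Equivalently, instead of invoking the defining property of $c$ you could note $\int_a^b c\,\eta^{\Delta}(t)\,\Delta t=c\bigl(\eta(b)-\eta(a)\bigr)=0$, which works for any constant, but the two computations coincide here.
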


Proposition~\ref{prop:rel:der} gives a relationship between delta
and nabla derivatives.

\begin{proposition}{\rm (\textrm{cf.} Theorems~2.5 and 2.6 of [\citet{A:G:02}])}
\label{prop:rel:der} (i) If $f : \mathbb{T} \rightarrow \mathbb{R}$
is delta differentiable on $\mathbb{T}^\kappa$ and $f^\Delta$ is
continuous on $\mathbb{T}^\kappa$, then $f$ is nabla differentiable
on $\mathbb{T}_\kappa$ and
\begin{equation}
\label{eq:chgN_to_D}
f^\nabla(t)=\left(f^\Delta\right)^\rho(t) \quad
\text{for all } t \in \mathbb{T}_\kappa \, .
\end{equation}
(ii) If $f : \mathbb{T} \rightarrow \mathbb{R}$ is nabla
differentiable on $\mathbb{T}_\kappa$ and $f^\nabla$ is continuous
on $\mathbb{T}_\kappa$, then $f$ is delta differentiable on
$\mathbb{T}^\kappa$ and
\begin{equation}
\label{eq:chgD_to_N}
f^\Delta(t)=\left(f^\nabla\right)^\sigma(t)
\quad \text{for all } t \in \mathbb{T}^\kappa \, .
\end{equation}
\end{proposition}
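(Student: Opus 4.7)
The plan is to prove part (i) by splitting on whether $t\in\T_\kappa$ is left-scattered or left-dense; part (ii) will then follow by the symmetric argument with the roles of $\sigma,\rho$ and $\Delta,\nabla$ swapped. In both parts the left-scattered / right-scattered case is a pure algebraic identity, while the dense case requires the continuity hypothesis on the derivative.

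For part (i), fix $t\in\T_\kappa$. If $t$ is left-scattered then $\rho(t)<t$, $\sigma(\rho(t))=t$, and $\mu(\rho(t))=t-\rho(t)=\nu(t)$. Since $\rho(t)\in\T^\kappa$, the delta derivative formula at a right-scattered point gives
\[
(f^\Delta)^\rho(t)=f^\Delta(\rho(t))=\frac{f(\sigma(\rho(t)))-f(\rho(t))}{\mu(\rho(t))}=\frac{f(t)-f(\rho(t))}{\nu(t)},
\]
which is exactly the value of $f^\nabla(t)$ at a left-scattered point. No continuity is used here.

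The harder case is $t$ left-dense, where $\rho(t)=t$ and we have to exhibit the nabla derivative from scratch and show it equals $f^\Delta(t)$. Here I would invoke continuity of $f^\Delta$ via the fundamental theorem of calculus for delta integrals: for $s\in\T$ in a sufficiently small left-neighborhood of $t$,
\[
f(t)-f(s)=\int_{s}^{t} f^\Delta(\tau)\,\Delta\tau,
\qquad\text{so}\qquad
\frac{f(t)-f(s)}{t-s}-f^\Delta(t)=\frac{1}{t-s}\int_{s}^{t}[f^\Delta(\tau)-f^\Delta(t)]\,\Delta\tau.
\]
The right-hand side tends to $0$ as $s\to t^-$ by continuity of $f^\Delta$ at $t$, yielding $|f(t)-f(s)-f^\Delta(t)(t-s)|\le\varepsilon|t-s|$ for all $s$ in a one-sided neighborhood. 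To match the time-scale definition of $f^\nabla(t)$ as $|f(\rho(t))-f(s)-f^\nabla(t)(\rho(t)-s)|\le\varepsilon|\rho(t)-s|$ on a full neighborhood $U\cap\T$, I would choose $U$ small enough to exclude $\sigma(t)$ when $t$ happens to be right-scattered (a finite distance $\mu(t)>0$ away), so that the only points of $U\cap\T$ other than $t$ are to the left of $t$; then the estimate above gives $f^\nabla(t)=f^\Delta(t)=(f^\Delta)^\rho(t)$.

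The main obstacle is this left-dense / right-scattered subcase, because $f^\Delta(t)$ is determined by a purely algebraic formula involving $f(\sigma(t))$ while $f^\nabla(t)$ must be read off from limits coming from the left; the only bridge between the two descriptions is continuity of $f^\Delta$, exploited through the fundamental theorem as above. Once part (i) is settled, part (ii) is handled by the dual argument: split on right-scattered vs.\ right-dense $t\in\T^\kappa$, treat the right-scattered case by the analogous arithmetic identity using $\rho(\sigma(t))=t$ and $\nu(\sigma(t))=\mu(t)$, and treat the right-dense case via $f(s)-f(t)=\int_t^s f^\nabla(\tau)\nabla\tau$ and continuity of $f^\nabla$.
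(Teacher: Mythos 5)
First, a point of comparison: the paper offers no proof of this proposition at all --- it is quoted as a known result, with a citation to Theorems~2.5 and 2.6 of Atici and Guseinov (2002) --- so your argument has to be judged on its own merits rather than against an in-paper proof.

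Your overall plan (algebraic identity at scattered points, continuity of the derivative plus the fundamental theorem of calculus at dense points) is sound, and the left-scattered computation is correct; the only thing to make explicit there is that identifying $\bigl(f(t)-f(\rho(t))\bigr)/\nu(t)$ with $f^\nabla(t)$ at a left-scattered $t$ uses continuity of $f$ at $t$, which you get from delta differentiability at $t$ (or from $t$ being isolated when $t=\max\mathbb{T}$). The genuine gap is in the left-dense case: your claim that $U$ can be chosen ``so that the only points of $U\cap\mathbb{T}$ other than $t$ are to the left of $t$'' is true only when $t$ is right-scattered. If $t$ is both left-dense and right-dense (e.g.\ every interior point when $\mathbb{T}$ contains a real interval), points of $\mathbb{T}$ to the right of $t$ accumulate at $t$ and cannot be excluded from any neighborhood, while the definition of $f^\nabla(t)$ demands $|f(t)-f(s)-f^\nabla(t)(t-s)|\le\varepsilon|t-s|$ for those $s>t$ as well; your estimate only covers $s\le t$. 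The repair costs nothing new: either run the same FTC estimate on the right using $f(s)-f(t)=\int_t^s f^\Delta(\tau)\,\Delta\tau$ to get the two-sided inequality, or, more simply, observe that at a point which is simultaneously left-dense and right-dense one has $\sigma(t)=\rho(t)=t$, so the defining inequalities for $f^\Delta(t)$ and $f^\nabla(t)$ coincide verbatim and $f^\nabla(t)=f^\Delta(t)$ holds with no appeal to continuity. The continuity hypothesis is genuinely needed only in the left-dense/right-scattered subcase, which you correctly single out as the crux. With this subcase distinction added (and its mirror image inserted into your sketch of part (ii)), the proof is complete.
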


\begin{proposition}{\rm (\textrm{cf.} Theorem~2.8 of [\citet{A:G:02}])}
\label{eq:prop} Let $a, b \in\mathbb{T}$ with $a \le b$ and let $f$
be a continuous function on $[a, b]$. Then,
\begin{equation*}
\begin{split}
\int_a^b f(t)\Delta t &= \int_a^{\rho(b)} f(t)\Delta t
+ (b - \rho(b))f^\rho(b) \, , \\
\int_a^b f(t)\Delta t &= (\sigma(a) - a) f(a)
+ \int_{\sigma(a)}^b f(t)\Delta t \, , \\
\int_a^b f(t)\nabla t &= \int_a^{\rho(b)} f(t)\nabla t
+ (b - \rho(b)) f(b) \, , \\
\int_a^b f(t)\nabla t &= (\sigma(a) - a) f^\sigma(a) +
\int_{\sigma(a)}^b f(t)\nabla t \, .
\end{split}
\end{equation*}
\end{proposition}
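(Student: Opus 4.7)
The plan is to derive all four identities from two ingredients: additivity of the time-scale integral over a decomposition of $[a,b]$, and the elementary one-step evaluation rules $\int_{s}^{\sigma(s)} f(t)\,\Delta t = \mu(s)\,f(s)$ and $\int_{\rho(s)}^{s} f(t)\,\nabla t = \nu(s)\,f(s)$, which follow from the definition of the delta and nabla integrals (or equivalently from Cauchy-Hilger sums on a single scattered step). These two rules are standard facts from the references already cited in the preliminaries, and continuity of $f$ on $[a,b]$ guarantees existence of all the integrals involved.

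For identity one, I would write $\int_a^b f(t)\,\Delta t = \int_a^{\rho(b)} f(t)\,\Delta t + \int_{\rho(b)}^{b} f(t)\,\Delta t$ by additivity. If $b$ is left-dense then $\rho(b)=b$, so the last integral vanishes and the factor $b-\rho(b)$ is zero, making the identity trivial. If $b$ is left-scattered then $\rho(b)<b$ and $\rho(b)$ is right-scattered with $\sigma(\rho(b))=b$ and graininess $\mu(\rho(b))=b-\rho(b)$; applying the one-step rule gives $\int_{\rho(b)}^{b} f(t)\,\Delta t = (b-\rho(b))\,f(\rho(b)) = (b-\rho(b))\,f^{\rho}(b)$, which is the claim.

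The remaining three identities are proved by the same pattern. For identity two, decompose at $\sigma(a)$ and evaluate $\int_{a}^{\sigma(a)} f(t)\,\Delta t = \mu(a)\,f(a) = (\sigma(a)-a)\,f(a)$, distinguishing again the right-dense case (in which the formula collapses trivially). For identity three, decompose the nabla integral at $\rho(b)$ and use $\int_{\rho(b)}^{b} f(t)\,\nabla t = \nu(b)\,f(b) = (b-\rho(b))\,f(b)$. For identity four, decompose at $\sigma(a)$; here $\sigma(a)$ is (when $a$ is right-scattered) left-scattered with $\nu(\sigma(a)) = \sigma(a)-a$, so $\int_{a}^{\sigma(a)} f(t)\,\nabla t = \nu(\sigma(a))\,f(\sigma(a)) = (\sigma(a)-a)\,f^{\sigma}(a)$.

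There is no real obstacle — the only care needed is the bookkeeping between the left/right dense and left/right scattered cases at the endpoints, so that the degenerate case (where $\sigma(a)=a$ or $\rho(b)=b$) is handled cleanly and the surviving identity is recognized as a tautology. Continuity of $f$ on $[a,b]$, assumed in the statement, ensures that all integrals and point values appearing on both sides are well defined, so the argument is complete once the two cases have been checked for each of the four formulas.
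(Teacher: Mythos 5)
Your argument is correct. Note, however, that the paper itself gives no proof of this proposition: it is quoted verbatim (cf.\ Theorem~2.8 of Atici and Guseinov, 2002) as a preliminary fact, so there is no internal proof to compare against. Your route --- splitting the integral by additivity at $\rho(b)$ or $\sigma(a)$ and then evaluating the one-step pieces via $\int_{s}^{\sigma(s)}f(t)\,\Delta t=\mu(s)f(s)$ and $\int_{\rho(s)}^{s}f(t)\,\nabla t=\nu(s)f(s)$, using $\sigma(\rho(b))=b$ when $b$ is left-scattered and $\rho(\sigma(a))=a$ when $a$ is right-scattered --- is the standard one and is carried out correctly in all four cases. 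The only remark worth making is that the dense/scattered case distinction is not really needed: the one-step evaluation formulas hold at dense points as well (both sides vanish there), so each identity follows uniformly from additivity plus the one-step rule; your explicit handling of the degenerate cases is harmless but redundant.
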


We end our brief review of the calculus on time scales with a
relationship between the delta and nabla integrals.

\begin{proposition}{\rm (\textrm{cf.} Proposition~7 of [\citet{G:G:S:05}])}
If function $f : \mathbb{T} \rightarrow \mathbb{R}$ is continuous,
then for all $a, b \in \mathbb{T}$ with $a < b$ we have
\begin{gather}
\int_a^b f(t) \Delta t = \int_a^b f^\rho(t) \nabla t \, , \label{eq:DtoN}\\
\int_a^b f(t) \nabla t = \int_a^b f^\sigma(t) \Delta t \, . \label{eq:NtoD}
\end{gather}
\end{proposition}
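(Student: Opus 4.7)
The plan is to prove both identities by constructing antiderivatives and invoking the derivative-conversion formulas \eqref{eq:chgN_to_D} and \eqref{eq:chgD_to_N} of Proposition~\ref{prop:rel:der}, together with the fundamental theorem of calculus in both delta and nabla flavors.

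For the first identity \eqref{eq:DtoN}, I would define
\[
F(t) := \int_a^t f(s)\,\Delta s, \qquad t \in [a,b]\cap\mathbb{T}.
\]
Since $f$ is continuous, $F$ is delta differentiable on $\mathbb{T}^\kappa$ with $F^{\Delta}(t) = f(t)$, which is itself continuous on $\mathbb{T}^\kappa$. Part (i) of Proposition~\ref{prop:rel:der} then yields
\[
F^{\nabla}(t) = \bigl(F^{\Delta}\bigr)^{\rho}(t) = f^{\rho}(t) \quad \text{for all } t \in \mathbb{T}_\kappa.
\]
Integrating this identity via the nabla fundamental theorem of calculus gives
\[
\int_a^b f^{\rho}(t)\,\nabla t = \int_a^b F^{\nabla}(t)\,\nabla t = F(b) - F(a) = \int_a^b f(t)\,\Delta t,
\]
which is precisely \eqref{eq:DtoN}.

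The second identity \eqref{eq:NtoD} is proved by the symmetric argument: set $G(t) := \int_a^t f(s)\,\nabla s$, observe that $G^{\nabla} = f$ is continuous on $\mathbb{T}_\kappa$, and apply part (ii) of Proposition~\ref{prop:rel:der} to conclude $G^{\Delta}(t) = (G^{\nabla})^{\sigma}(t) = f^{\sigma}(t)$ on $\mathbb{T}^\kappa$. Integrating in the delta sense yields $\int_a^b f^{\sigma}(t)\,\Delta t = G(b) - G(a) = \int_a^b f(t)\,\nabla t$.

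There is no real obstacle: the entire argument reduces to bookkeeping about which derivative is continuous where, and the hypothesis that $f$ is continuous on $\mathbb{T}$ is exactly what is needed so that both antiderivatives $F$ and $G$ satisfy the continuity-of-derivative assumptions required by Proposition~\ref{prop:rel:der}. The only point requiring a moment of care is that the shift $(F^{\Delta})^{\rho} = f^{\rho}$ is well defined pointwise on $\mathbb{T}_\kappa$ (and similarly for $\sigma$ on $\mathbb{T}^\kappa$), so that the right-hand integrals in \eqref{eq:DtoN}--\eqref{eq:NtoD} make sense; continuity of $f$ makes $f^{\rho}$ ld-continuous and $f^{\sigma}$ rd-continuous, so both integrals exist.
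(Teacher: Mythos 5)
Your proof is correct: the antiderivative construction combined with Proposition~\ref{prop:rel:der} and the delta/nabla fundamental theorems of calculus establishes both identities, and your remark that continuity of $f$ makes $f^{\rho}$ ld-continuous and $f^{\sigma}$ rd-continuous properly settles existence of the integrals. Note that the paper itself offers no proof of this proposition (it is quoted from Proposition~7 of [\citet{G:G:S:05}]), and your argument is essentially the standard one given in that cited source, so there is nothing to add.
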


% ---------------------------------------------------

\section{MAIN RESULTS}

Let $\mathbb{T}$ be a given time scale with $a, b \in \mathbb{T}$, $a < b$,
and $\mathbb{T} \cap (a,b) \ne \emptyset$; $L_{\Delta}(\cdot,\cdot,\cdot)$
and $L_{\nabla}(\cdot,\cdot,\cdot)$ be two given smooth
functions from $\T \times \mathbb{R}^2$ to $\mathbb{R}$ and $\Ga,\Gb\in\R$.
Our results are trivially generalized for
admissible functions $y : \T\rightarrow\mathbb{R}^n$
but for simplicity of presentation
we restrict ourselves to the scalar case $n=1$.

\subsection{DELTA-NABLA ISOPERIMETRIC PROBLEMS}
\label{isoperimetric}

We consider the delta-nabla integral functional
\begin{equation*}
\label{eq:P}
\begin{split}
\g(y)
&= \Ga\int_a^b L_{\Delta}\left(t,y^\sigma(t),y^\Delta(t)\right) \Delta t
+
\Gb\int_a^b L_{\nabla}\left(t,y^\rho(t),y^\nabla(t)\right) \nabla t \, .
\end{split}
\end{equation*}
For brevity we introduce the operators
$[y]$ and $\{y\}$ defined by
\begin{equation*}
[y](t) = \left(t,y^\sigma(t),y^\Delta(t)\right)
\ \  \text{ and } \ \
\{y\}(t) = \left(t,y^\rho(t),y^\nabla(t)\right) \, .
\end{equation*}
Then we can write:
\begin{equation*}
\begin{split}
\g_\Delta(y) &= \int_a^b L_\Delta[y](t) \Delta t \, , \\
\g_\nabla(y) &= \int_a^b L_\nabla\{y\}(t) \nabla t \, ,\\
\g(y) &= \Ga\g_\Delta(y)+\Gb \g_\nabla(y)
=\Ga\int_a^b L_{\Delta}[y](t) \Delta t
+ \Gb\int_a^b L_\nabla\{y\}(t)  \nabla t  \, .
\end{split}
\end{equation*}

Let $\alpha$, $\beta$, $\Ga$, $\Gb$, $k$, $k_1$, and $k_2$ be given real numbers.
Let us denote by $C_{\diamond}^{1}(I,\R)$ the class of functions
$y : I\rightarrow\mathbb{R}$  with
$(|\Ga| + |k_1|) y^\Delta$ continuous on $I^\kappa$
and $(|\Gb| + |k_2|) y^\nabla$ continuous on $I_\kappa$.
We consider the question of finding $y \in C_{\diamond}^{1}(I,\R)$
that is a solution to the problem
\begin{equation}\label{problem:P:iso}
\text{extremize}\ \ \mathcal{L}(y) = \gamma_1\int_a^b L_{\Delta}[y](t) \Delta t
+ \gamma_2\int_a^b L_{\nabla}\{y\}(t) \nabla t
\end{equation}
subject to the boundary conditions
\begin{equation}
\label{bou:con:iso}
y(a) = \alpha \, , \quad y(b) = \beta \, ,
\end{equation}
and the isoperimetric constraint
\begin{equation}
\label{const}
\mathcal{K}(y) = k_1\int_a^b K_{\Delta}[y](t) \Delta t
+k_2\int_a^b K_{\nabla}\{y\}(t) \nabla t=k \, ,
\end{equation}
where $K_{\Delta}(\cdot,\cdot,\cdot)$
and $K_{\nabla}(\cdot,\cdot,\cdot)$ are given smooth
functions from $\T \times \mathbb{R}^2$ to $\mathbb{R}$.

Function $y \in C_{\diamond}^{1}(I,\R)$ is said to be \emph{admissible}
provided it satisfies conditions \eqref{bou:con:iso} and \eqref{const}.
We are interested to obtain necessary conditions for an
admissible function to be a local minimizer (or a local
maximizer) to problem \eqref{problem:P:iso}--\eqref{const}.

\begin{definition}
\label{def:minimizer}
We say that $\hat{y}\in C_{\diamond}^{1}(I,\R)$
is a local minimizer (respectively local
maximizer) to problem \eqref{problem:P:iso}--\eqref{const} if there
exists $\delta >0$ such that
$\mathcal{L}(\hat{y})\leq \mathcal{L}(y)$
(respectively $\mathcal{L}(\hat{y}) \geq \mathcal{L}(y)$)
for all admissible functions $y \in C_{\diamond}^{1}(I,\R)$
satisfying the inequality
$\parallel y - \hat{y}\parallel_{1,\infty} < \delta$, where
$\parallel y\parallel_{1,\infty}:=
\parallel y^{\sigma}\parallel_{\infty}
+ \parallel y^{\rho}\parallel_{\infty} + \parallel
y^{\Delta}\parallel_{\infty} + \parallel
y^{\nabla}\parallel_{\infty}$
with $\parallel y\parallel_{\infty} :=\sup_{t \in
I_{\kappa}^{\kappa}}\mid y(t) \mid$.
\end{definition}

Let $\partial_{i}K$ denote the standard
partial derivative of a function $K(\cdot,\cdot,\cdot)$
with respect to its $i$th variable, $i = 1,2,3$.
The following definition is motivated by the time scale
Euler-Lagrange equations proved in [\citet{china-Xuzhou}]
and [\citet{Bedlewo:2009}].

\begin{definition}
\label{nor:abnor}
We say that $\hat{y} \in C_{\diamond}^{1}(I,\R)$ is an \emph{extremal} of
$$
\mathcal{K}(y) = k_1\int_a^b K_{\Delta}[y](t) \Delta t+k_2\int_a^b K_{\nabla}\{y\}(t) \nabla t
$$
if $\hat{y}$ satisfies the following Euler-Lagrange delta-nabla integral equations:
\begin{multline*}
k_1\left(\partial_3
K_\Delta[\hat{y}](\rho(t))
-\int_{a}^{\rho(t)} \partial_2 K_\Delta[\hat{y}](\tau) \Delta\tau\right)\\
+  k_2\left(\partial_3
K_\nabla\{\hat{y}\}(t) -\int_{a}^{t} \partial_2
K_\nabla\{\hat{y}\}(\tau) \nabla\tau\right) = \text{const} \quad
\forall t \in I_\kappa \, ;
\end{multline*}
\begin{multline*}
k_1\left(\partial_3
K_\Delta[\hat{y}](t)
-\int_{a}^{t} \partial_2 K_\Delta[\hat{y}](\tau) \Delta\tau\right)\\
+ k_2\left(\partial_3
K_\nabla\{\hat{y}\}(\sigma(t)) -\int_{a}^{\sigma(t)} \partial_2
K_\nabla\{\hat{y}\}(\tau) \nabla\tau\right) = \text{const} \quad
\forall t \in I^\kappa \, .
\end{multline*}
An extremizer (\textrm{i.e.}, a local minimizer or a local
maximizer) to problem \eqref{problem:P:iso}--\eqref{const} that is
not an extremal of $\mathcal{K}$ in \eqref{const}
is said to be a normal extremizer;
otherwise (\textrm{i.e.}, if it is an extremal of $\mathcal{K}$), the
extremizer is said to be abnormal.
\end{definition}

\begin{remark}
The word \emph{extremal} means ``solution of the Euler-Lagrange necessary optimality conditions''.
An extremizer is an extremal; but an extremal is not necessarily
an extremizer (it is just a candidate to extremizer given
by the first order necessary conditions).
\end{remark}

Associated to problem \eqref{problem:P:iso}--\eqref{const}
we introduce the following notations:
\begin{equation}
\label{eq:H1}
\begin{array}{l}
H_{\Delta}[\hat{y},\lambda](t):=H_{\Delta}(t,\hat{y}^{\sigma}(t),\hat{y}^{\Delta}(t),\lambda)
:=\Ga L_{\Delta}[\hat{y}](t)-k_1\lambda K_{\Delta}[\hat{y}](t)\\
H_{\nabla}\{\hat{y},\lambda\}(t):=H_{\nabla}(t,\hat{y}^{\rho}(t),\hat{y}^{\nabla}(t),\lambda)
:=\Gb L_{\nabla}\{\hat{y}\}(t)-k_2\lambda K_{\nabla}\{\hat{y}\}(t) \, .
\end{array}
\end{equation}
We look to $H_{\Delta}$ and $H_{\nabla}$ as functions of four
independent variables, and we denote the partial derivatives of
$H_{\Delta}(\cdot,\cdot,\cdot,\cdot)$
and
$H_{\nabla}(\cdot,\cdot,\cdot,\cdot)$
with respect to their $i$th argument, $i = 1,2,3,4$, by
$\partial_{i}H_{\Delta}$ and $\partial_{i}H_{\nabla}$ respectively.

\begin{theorem}[Necessary optimality conditions for normal extremizers
of a delta-nabla isoperimetric problem]
\label{thm:mr}
If $\hat{y} \in C_{\diamond}^{1}(I,\R)$ is a normal extremizer to the isoperimetric problem
\eqref{problem:P:iso}--\eqref{const}, then there exists $\lambda\in\R$ such that $\hat{y}$ satisfies the
following delta-nabla integral equations:
\begin{multline}
\label{iso:EL1}  \partial_3 H_\Delta[\hat{y},\lambda](\rho(t))+\partial_3H_\nabla\{\hat{y},\lambda\}(t)\\
-\left(\int_{a}^{\rho(t)} \partial_2 H_\Delta[\hat{y},\lambda](\tau) \Delta\tau
+\int_{a}^t \partial_2 H_\nabla\{\hat{y},\lambda\}(\tau) \nabla\tau\right)=\text{const}\ \ \forall t \in I_\kappa\,;
\end{multline}
\begin{multline}
\label{iso:EL2}
\partial_3 H_\Delta[\hat{y},\lambda](t)+\partial_3H_\nabla\{\hat{y},\lambda\}(\sigma(t))\\
-\left(\int_{a}^{t} \partial_2 H_\Delta[\hat{y},\lambda](\tau) \Delta\tau
+\int_{a}^{\sigma(t)} \partial_2 H_\nabla\{\hat{y},\lambda\}(\tau)\nabla\tau\right)
=\text{const}\ \ \forall t \in I^\kappa \, ,
\end{multline}
where $H_{\Delta}$ and $H_{\nabla}$ are defined by \eqref{eq:H1}.
\end{theorem}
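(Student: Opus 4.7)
The natural approach is a Lagrange multiplier argument for isoperimetric problems adapted to the mixed delta-nabla setting. I would consider two-parameter admissible variations $y = \hat{y} + \varepsilon_1\eta_1 + \varepsilon_2\eta_2$, with $\eta_1,\eta_2 \in C_\diamond^1(I,\R)$ satisfying $\eta_i(a)=\eta_i(b)=0$, and form the real-valued maps $\Phi(\varepsilon_1,\varepsilon_2) = \mathcal{L}(y)$ and $\Psi(\varepsilon_1,\varepsilon_2) = \mathcal{K}(y) - k$. Since $\hat{y}$ is a \emph{normal} extremizer, by Definition~\ref{nor:abnor} it is not an extremal of $\mathcal{K}$, which (after carrying through the computation that underlies the Euler-Lagrange equations in Definition~\ref{nor:abnor}) will allow me to pick $\eta_2$ so that $\partial_{\varepsilon_2}\Psi(0,0)\neq 0$. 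The implicit function theorem then provides $\varepsilon_2=\varepsilon_2(\varepsilon_1)$ near $0$ with $\Psi(\varepsilon_1,\varepsilon_2(\varepsilon_1)) \equiv 0$, so $\varepsilon_1 \mapsto \Phi(\varepsilon_1,\varepsilon_2(\varepsilon_1))$ has a local extremum at $0$; setting $\lambda := \partial_{\varepsilon_2}\Phi(0,0)/\partial_{\varepsilon_2}\Psi(0,0)$ and eliminating $\varepsilon_2'(0)$ yields the unconstrained first order condition $\partial_{\varepsilon_1}(\Phi-\lambda\Psi)(0,0)=0$ for \emph{every} admissible $\eta_1$.

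Writing out this identity in terms of $H_\Delta$ and $H_\nabla$ from \eqref{eq:H1}, I get
\begin{equation*}
\int_a^b\bigl[\partial_2 H_\Delta[\hat{y},\lambda]\,\eta^\sigma + \partial_3 H_\Delta[\hat{y},\lambda]\,\eta^\Delta\bigr]\Delta t
+\int_a^b\bigl[\partial_2 H_\nabla\{\hat{y},\lambda\}\,\eta^\rho + \partial_3 H_\nabla\{\hat{y},\lambda\}\,\eta^\nabla\bigr]\nabla t = 0
\end{equation*}
for all such $\eta$. The next step is to eliminate the $\eta^\sigma$ and $\eta^\rho$ factors: introducing the antiderivatives $A(t):=\int_a^t\partial_2 H_\Delta[\hat{y},\lambda](\tau)\,\Delta\tau$ and $B(t):=\int_a^t\partial_2 H_\nabla\{\hat{y},\lambda\}(\tau)\,\nabla\tau$, the first and third identities in \eqref{intBP} (with $\eta$ vanishing at the endpoints) turn the above into
\begin{equation*}
\int_a^b\bigl[\partial_3 H_\Delta[\hat{y},\lambda] - A\bigr]\eta^\Delta\,\Delta t
+\int_a^b\bigl[\partial_3 H_\nabla\{\hat{y},\lambda\} - B\bigr]\eta^\nabla\,\nabla t = 0.
\end{equation*}

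To apply a Dubois-Reymond lemma I must first merge the two integrals into one. Using \eqref{eq:DtoN} together with Proposition~\ref{prop:rel:der}(i), which gives $(\eta^\Delta)^\rho = \eta^\nabla$, I convert the first integral into a nabla integral; the arbitrariness of $\eta$ and Lemma~\ref{DBRL:n} (nabla Dubois-Reymond) then yield \eqref{iso:EL1}, since the $\rho$ is transferred to both $\partial_3 H_\Delta$ and $A$. Symmetrically, using \eqref{eq:NtoD} and Proposition~\ref{prop:rel:der}(ii), which gives $(\eta^\nabla)^\sigma = \eta^\Delta$, the second integral becomes a delta integral and Lemma~\ref{DBRL:d} produces \eqref{iso:EL2}.

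The main obstacle, in my view, is the justification that normality really supplies an $\eta_2$ with $\partial_{\varepsilon_2}\Psi(0,0)\neq 0$: one must integrate by parts on $\Psi$ exactly as above and observe that the resulting linear functional on $\eta_2$ vanishes identically precisely when $\hat y$ satisfies the two Euler-Lagrange integral equations of Definition~\ref{nor:abnor}, so the contrapositive of ``$\hat y$ is an extremal of $\mathcal{K}$'' delivers the needed $\eta_2$. Beyond that, the integration by parts, the $\Delta\leftrightarrow\nabla$ conversion, and the two Dubois-Reymond lemmas dispose of the rest essentially mechanically.
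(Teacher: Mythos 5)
Your proposal follows essentially the same route as the paper's proof: a two-parameter variation $\hat{y}+\varepsilon_1\eta_1+\varepsilon_2\eta_2$, the contrapositive of normality combined with the Dubois--Reymond lemmas to secure $\partial_{\varepsilon_2}\Psi(0,0)\neq 0$, the implicit function theorem plus the Lagrange multiplier idea (which you derive by the chain rule where the paper simply cites the multiplier rule), then integration by parts via \eqref{intBP}, conversion between delta and nabla integrals using \eqref{eq:DtoN}--\eqref{eq:NtoD} and Proposition~\ref{prop:rel:der}, and finally Lemma~\ref{DBRL:n} for \eqref{iso:EL1} and Lemma~\ref{DBRL:d} for \eqref{iso:EL2}. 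The argument is correct and matches the paper's proof in all essential steps.
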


\begin{proof}
Consider a variation of $\hat{y}$, say $\bar{y}=\hat{y} +
\varepsilon_{1} \eta_{1}+\varepsilon_{2} \eta_{2}$, where
$\eta_{i}\in C_{\diamond}^{1}(I,\R)$
and $\eta_{i}(a)=\eta_{i}(b)=0$, $i\in \{1,2\}$,
and $\varepsilon_{i}$ is a
sufficiently small parameter ($\varepsilon_{1}$ and
$\varepsilon_{2}$ must be such that $\parallel
\bar{y}-\hat{y}\parallel_{1,\infty}<\delta$ for some $\delta>0$).
Here $\eta_{1}$ is an arbitrary fixed function and $\eta_{2}$ is a
fixed function that will be chosen later. Define the real function
\begin{equation*}
\bar{K}(\varepsilon_{1},\varepsilon_{2})=\mathcal{K}(\bar{y})=k_1\int_a^b
K_{\Delta}[\bar{y}](t) \Delta t+ k_2\int_a^b
K_{\nabla}\{\bar{y}\}(t) \nabla t-k.
\end{equation*}
We have
\begin{multline*}
\left.\frac{\partial\bar{K}}{\partial
\varepsilon_{2}}\right|_{(0,0)} =
k_1\int_a^b \left(\partial_2 K_\Delta[\hat{y}](t) \eta_{2}^\sigma(t) +
\partial_3 K_\Delta[\hat{y}](t) \eta_{2}^\Delta(t)\right) \Delta t\\
+ k_2\int_a^b \left(\partial_2
K_\nabla\{\hat{y}\}(t) \eta_{2}^\rho(t) + \partial_3
K_\nabla\{\hat{y}\}(t) \eta_{2}^\nabla(t)\right) \nabla t\, .
\end{multline*}
The first and third integration by parts formula in \eqref{intBP} give
\begin{equation*}
\begin{split}
\int_a^b \partial_2 & K_\Delta[\hat{y}](t) \eta_{2}^\sigma(t)\Delta t\\
&= \int_a^t\partial_2 K_\Delta[\hat{y}](\tau)\Delta \tau \eta_{2}(t)|^{t=b}_{t=a}
-\int_a^b\left(\int_a^t
\partial_2 K_\Delta[\hat{y}](\tau)\Delta\tau\right)\eta_{2}^{\Delta}(t) \Delta t\\
&=-\int_a^b\left(\int_a^t \partial_2 K_\Delta[\hat{y}](\tau)\Delta\tau\right)\eta_{2}^{\Delta}(t) \Delta t
\end{split}
\end{equation*}
and
\begin{equation*}
\begin{split}
\int_a^b \partial_2 & K_\nabla\{\hat{y}\}(t) \eta_{2}^\rho(t)\nabla t\\
&=\int_a^t\partial_2 K_\nabla\{\hat{y}\}(\tau)\nabla\tau
\eta_{2}(t)|^{t=b}_{t=a}
-\int_a^b\left(\int_a^t\partial_2 K_\nabla\{\hat{y}\}(\tau)\nabla \tau \right) \eta_{2}^\nabla(t)\nabla t\\
&=-\int_a^b\left(\int_a^t\partial_2 K_\nabla\{\hat{y}\}(\tau)\nabla \tau \right) \eta_{2}^\nabla(t)\nabla t
\end{split}
\end{equation*}
since $\eta_{2}(a)=\eta_{2}(b)=0$. Therefore,
\begin{multline}
\label{after:parts}
\left.\frac{\partial\bar{K}}{\partial
\varepsilon_{2}}\right|_{(0,0)}= k_1\int_a^b
\left(\partial_3 K_\Delta[\hat{y}](t)-\int_a^t
\partial_2 K_\Delta[\hat{y}](\tau)\Delta\tau\right)\eta_{2}^{\Delta}(t) \Delta t\\
+k_2\int_a^b \left(\partial_3
K_\nabla\{\hat{y}\}(t)-\int_a^t\partial_2
K_\nabla\{\hat{y}\}(\tau)\nabla \tau \right)
\eta_{2}^\nabla(t)\nabla t.
\end{multline}
Let $$f(t)=\partial_3
K_\Delta[\hat{y}](t)-\int_a^t
\partial_2 K_\Delta[\hat{y}](\tau)\Delta\tau$$ and $$g(t)=\partial_3
K_\nabla\{\hat{y}\}(t)-\int_a^t\partial_2
K_\nabla\{\hat{y}\}(\tau)\nabla \tau .$$ We can then write
equation \eqref{after:parts} in the form
\begin{equation}\label{after:sub}
\left.\frac{\partial\bar{K}}{\partial
\varepsilon_{2}}\right|_{(0,0)}=k_1\int_a^bf(t)\eta_{2}^{\Delta}(t)
\Delta t+k_2\int_a^bg(t)\eta_{2}^\nabla(t)\nabla t.
\end{equation}
Transforming the delta integral in \eqref{after:sub} to a nabla
integral by means of \eqref{eq:DtoN}, we obtain that
\begin{equation*}
\left.\frac{\partial\bar{K}}{\partial
\varepsilon_{2}}\right|_{(0,0)}=k_1\int_a^b f^{\rho}(t)(\eta_{2}^{\Delta})^{\rho}(t)
\nabla t+k_2\int_a^bg(t)\eta_{2}^\nabla(t)\nabla t
\end{equation*}
and by \eqref{eq:chgN_to_D}
\begin{equation*}
\left.\frac{\partial\bar{K}}{\partial
\varepsilon_{2}}\right|_{(0,0)}=\int_a^b\left(k_1 f^{\rho}(t)
+k_2g(t)\right)\eta_{2}^\nabla(t)\nabla t.
\end{equation*}
As $\hat{y}$ is a normal extremizer, we conclude by
Lemma~\ref{DBRL:n} that there exists
$\eta_2$ such that $\left.\frac{\partial\bar{K}}{\partial
\varepsilon_{2}}\right|_{(0,0)}\neq 0$. Note that the same result
can be obtained by transforming the nabla integral in
\eqref{after:sub} to a delta integral by means of \eqref{eq:NtoD},
and then using Lemma~\ref{DBRL:d}. Since
$\bar{K}(0,0)=0$, by the implicit function theorem we conclude that
there exists a function $\varepsilon_{2}$ defined in the
neighborhood of zero such that
$\bar{K}(\varepsilon_{1},\varepsilon_{2}(\varepsilon_{1}))=0$, \textrm{i.e.},
we may choose a subset of variations $\bar{y}$ satisfying the
isoperimetric constraint. Let us now consider the real function
\begin{equation*}
\bar{L}(\varepsilon_{1},\varepsilon_{2})=\mathcal{L}(\bar{y})=\Ga\int_a^b
L_{\Delta}[\bar{y}](t) \Delta t+ \Gb\int_a^b
L_{\nabla}\{\bar{y}\}(t) \nabla t.
\end{equation*}
By hypothesis, $(0,0)$ is an extremal of $\bar{L}$ subject to the
constraint $\bar{K}=0$ and $\nabla \bar{K}(0,0)\neq \textbf{0}$. By
the Lagrange multiplier rule, there exists some real $\lambda$ such
that $\nabla(\bar{L}(0,0)-\lambda\bar{K}(0,0))=\textbf{0}$. Having
in mind that $\eta_{1}(a)=\eta_{1}(b)=0$, we can write
\begin{multline}
\label{function:L}
\left.\frac{\partial\bar{L}}{\partial
\varepsilon_{1}}\right|_{(0,0)} =
\Ga\int_a^b \left(\partial_3 L_\Delta[\hat{y}](t)-\int_a^t
\partial_2 L_\Delta[\hat{y}](\tau)\Delta\tau\right)\eta_{1}^{\Delta}(t) \Delta
t\\
+\Gb\int_a^b \left(\partial_3
L_\nabla\{\hat{y}\}(t)-\int_a^t\partial_2
L_\nabla\{\hat{y}\}(\tau)\nabla \tau \right)
\eta_{1}^\nabla(t)\nabla t
\end{multline}
and
\begin{multline}\label{function:K}
\left.\frac{\partial\bar{K}}{\partial
\varepsilon_{1}}\right|_{(0,0)}=k_1\int_a^b
\left(\partial_3 K_\Delta[\hat{y}](t)-\int_a^t
\partial_2 K_\Delta[\hat{y}](\tau)\Delta\tau\right)\eta_{1}^{\Delta}(t) \Delta
t\\
+k_2\int_a^b \left(\partial_3
K_\nabla\{\hat{y}\}(t)-\int_a^t\partial_2
K_\nabla\{\hat{y}\}(\tau)\nabla \tau \right)
\eta_{1}^\nabla(t)\nabla t.
\end{multline}
Let $$m(t)=\partial_3
L_\Delta[\hat{y}](t)-\int_a^t
\partial_2 L_\Delta[\hat{y}](\tau)\Delta\tau$$ and $$n(t)=\partial_3
L_\nabla\{\hat{y}\}(t)-\int_a^t\partial_2
L_\nabla\{\hat{y}\}(\tau)\nabla \tau .$$ Then equations
\eqref{function:L} and \eqref{function:K} can be written in the form
\begin{equation*}
\left.\frac{\partial\bar{L}}{\partial
\varepsilon_{1}}\right|_{(0,0)}=\Ga\int_a^bm(t)\eta_{1}^{\Delta}(t)
\Delta t+\Gb\int_a^bn(t)\eta_{1}^\nabla(t)\nabla t
\end{equation*}
and
\begin{equation*}
\left.\frac{\partial\bar{K}}{\partial
\varepsilon_{1}}\right|_{(0,0)}=k_1\int_a^bf(t)\eta_{1}^{\Delta}(t)
\Delta t+k_2\int_a^bg(t)\eta_{1}^\nabla(t)\nabla t.
\end{equation*}
Transforming the delta integrals in the above equalities to nabla
integrals by means of \eqref{eq:DtoN} and using \eqref{eq:chgN_to_D},
we obtain
\begin{equation*}
\left.\frac{\partial\bar{L}}{\partial
\varepsilon_{1}}\right|_{(0,0)}
=\int_a^b\left(\Ga m^{\rho}(t)+\Gb n(t)\right)\eta_{1}^\nabla(t)\nabla t
\end{equation*}
and
\begin{equation*}
\left.\frac{\partial\bar{K}}{\partial
\varepsilon_{1}}\right|_{(0,0)}=\int_a^b\left(k_1 f^{\rho}(t)
+ k_2 g(t)\right)\eta_{1}^\nabla(t)\nabla t.
\end{equation*}
Therefore,
\begin{equation}
\label{iso}
\int_{a}^{b}\eta_{1}^{\nabla}(t)\left\{\Ga m^{\rho}(t)+\Gb n(t)
-\lambda\left(k_1 f^{\rho}(t) + k_2 g(t)\right)\right\}\nabla t = 0.
\end{equation}
Since \eqref{iso} holds for any $\eta_{1}$,
by Lemma~\ref{DBRL:n} we have
\begin{equation*}
\Ga m^{\rho}(t)+\Gb n(t)-\lambda\left(k_1 f^{\rho}(t) + k_2 g(t)\right)=c
\end{equation*}
for some $c\in \mathbb{R}$ and all $t \in I_\kappa$. Hence,
condition \eqref{iso:EL1} holds. Equation \eqref{iso:EL1} can also
be obtained by transforming nabla integrals to delta integrals by
means of \eqref{eq:NtoD} and then using Lemma~\ref{DBRL:d}.
Equation \eqref{iso:EL2} can be shown in a totally analogous way.
\end{proof}

\begin{example}(normal extremals)
\label{example:iso}
(a) Let $\T=\{1,3,4\}$ and consider the problem
\begin{gather}
\label{ex:5}
\text{minimize }\ \g (y)=\int^4_1t\left(y^{\nabla}(t)\right)^2\nabla t \\
y(1)=0,\ \ y(4)=1
\end{gather}
subject to the constraint
\begin{equation}\label{ex:4}
\mathcal{K}(y)=\int^4_1t\left(y^{\Delta}(t)\right)^2\Delta t=\frac{105}{242}.
\end{equation}
Since $L_{\nabla}=t\left(y^{\nabla}\right)^2$ and
$K_{\Delta}=t\left(y^{\Delta}\right)^2$, we have
\[
\partial_2L_{\nabla}=0,\ \ \partial_3L_{\nabla}=2ty^{\nabla},\ \ \partial_2K_{\Delta}=0,\ \ \partial_3K_{\Delta}=2ty^{\Delta}.
\]
Let us assume for the moment that we are in conditions to apply Theorem~\ref{thm:mr}.
Applying equation~\eqref{iso:EL2} of Theorem~\ref{thm:mr}
we get the following delta-nabla differential equation:
\[
2\sigma(t)y^{\nabla}(\sigma(t))-\lambda2ty^{\Delta}(t)=C,\ \ t\in\{1,3\} \, ,
\]
where $C\in\R$. By \eqref{eq:chgD_to_N} we can write the above equation in the form
\begin{equation}\label{eq:iso1}
2\sigma(t)y^{\Delta}(t)-\lambda2ty^{\Delta}(t)=C,\ \ t\in\{1,3\}.
\end{equation}
Since $y^\Delta(1) = \left(y(3)-y(1)\right)/2 = y(3)/2$
and $y^\Delta(3) = y(4)-y(3) = 1-y(3)$, solving equation~\eqref{eq:iso1}
subject to the boundary conditions $y(1)=0$ and $y(4)=1$ we get
\begin{equation*}
\left\{
  \begin{array}{l}
    3y(3)-\lambda y(3)=C \\
    8(1-y(3))-6\lambda(1-y(3))=C,
  \end{array}
\right.
\end{equation*}
what implies
\begin{equation}\label{eq:iso2}
y(t)=\begin{cases}
        0 & \text{if $t=1$}\\
        \frac{8-6\lambda}{11-7\lambda} & \text{if $t=3$}\\
        1 & \text{if $t=4$.}
        \end{cases}
\end{equation}
Substituting \eqref{eq:iso2} into \eqref{ex:4} we obtain $\lambda_1=\frac{-11}{3}$,
$\lambda_2=\frac{143}{21}$. Hence, we get two extremals, $y_1$ and $y_2$,
corresponding to $\lambda_1$ and $\lambda_2$, respectively:
\[y_1(t)=\begin{cases}
        0 & \text{if $t=1$}\\
        \frac{9}{11} & \text{if $t=3$}\\
        1 & \text{if $t=4$}
        \end{cases},\,\ \
y_2(t)=\begin{cases}
        0 & \text{if $t=1$}\\
        \frac{69}{77} & \text{if $t=3$}.\\
        1 & \text{if $t=4$}
        \end{cases}
\]
One can easily check that $\g(y_1)=\frac{25}{22}$ and
$\g(y_2)=\frac{1345}{1078}$. We now show that $y_1$ is not an
extremal for $\mathcal{K}$. Indeed,
\begin{multline*}
\partial_3 K_\Delta[y_1](t)
-\int_{a}^{t} \partial_2 K_\Delta[y_1](\tau) \Delta\tau +
\partial_3 K_\nabla\{y_1\}(\sigma(t)) -\int_{a}^{\sigma(t)}
\partial_2 K_\nabla\{y_1\}(\tau) \nabla\tau\\
 =\partial_3
K_\Delta[y_1](t)=2ty_1^\Delta(t)=
\begin{cases}
        \frac{9}{11} & \text{if $t=1$}\\
        \frac{12}{11} & \text{if $t=3$}.
        \end{cases}
\end{multline*}
Thus $y_1$ is a candidate local minimizer to problem~\eqref{ex:5}--\eqref{ex:4}.\\
(b) Let $\T=\{1,3,4\}$ and consider the problem
\begin{gather}
\text{minimize }\ \g (y)=\int^4_1t\left(y^{\Delta}(t)\right)^2\Delta t\label{ex:5:b}\\
y(1)=0,\ \ y(4)=1 \label{ex:5:c}
\end{gather}
subject to the constraint
\begin{equation}
\label{ex:4:b}
\mathcal{K}(y)=\int^4_1t\left(y^{\nabla}(t)\right)^2\nabla t=\frac{25}{22}.
\end{equation}
Proceeding analogously as before, we find
\[y_1(t)=\begin{cases}
        0 & \text{if $t=1$}\\
        \frac{9}{11} & \text{if $t=3$}\\
        1 & \text{if $t=4$}
        \end{cases}\ \
        \]
as a candidate local minimizer to
problem~\eqref{ex:5:b}--\eqref{ex:4:b}.
\end{example}

As a particular case of Theorem~\ref{thm:mr} we obtain the following result:

\begin{corollary}[Necessary optimality condition for normal
extremizers of a delta isoperimetric problem -- \textrm{cf.} Theorem~3.4 of \citet{F:T:10}]
\label{T1}
Suppose that the problem of minimizing
\begin{equation*}
J(y)=\int_a^b L(t,y^\sigma(t),y^\Delta(t))\Delta t
\end{equation*}
subject to the boundary conditions $y(a)=y_a$, $y(b)=y_b$,
and the isoperimetric constraint
\begin{equation*}
I(y)=\int_a^b g(t,y^\sigma(t),y^\Delta(t))\Delta t=l
\end{equation*}
has a local solution at $\hat{y}$
in the class of functions $y:[a,b] \to \mathbb{R}$ such that $y^{\Delta}$
exists and is continuous on $[a,b]^{\kappa}$,
and that $\hat{y}$ is not an extremal for the functional $I$.
Then, there exists a Lagrange multiplier constant
$\lambda$ such that $\hat{y}$ satisfies
\begin{equation*}
\frac{\Delta}{\Delta t}\left[\partial_3 F(t,\hat{y}^\sigma(t),\hat{y}^\Delta(t))\right]
-\partial_2 F(t,\hat{y}^\sigma(t),\hat{y}^\Delta(t))=0\
\mbox{for all} \ t\in[a,b]^{\kappa^2}
\end{equation*}
with $F(t,x,v)=L(t,x,v)-\lambda g(t,x,v)$.
\end{corollary}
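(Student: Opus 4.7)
The plan is to derive the corollary as the direct specialization of Theorem~\ref{thm:mr} in which the nabla components are switched off. Concretely, I will choose $\gamma_1 = 1$, $\gamma_2 = 0$, $k_1 = 1$, $k_2 = 0$, identify $L_\Delta$ with the corollary's $L$ and $K_\Delta$ with its $g$, and take $L_\nabla$ and $K_\nabla$ to be arbitrary (say, zero). With these choices the integral functional $\mathcal{L}$ in \eqref{problem:P:iso} reduces to $J$, and the constraint $\mathcal{K}=k$ in \eqref{const} reduces to $I(y)=l$, so admissible functions for the corollary are admissible for Theorem~\ref{thm:mr}. Likewise, the condition ``$\hat{y}$ is not an extremal of $I$'' coincides, under $k_2=0$, with the normality assumption from Definition~\ref{nor:abnor}, because the nabla part of the extremal equations disappears and one is left precisely with the delta extremal equation for $I$.

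Next I would substitute these choices into the auxiliary function \eqref{eq:H1}, giving $H_\Delta[\hat y,\lambda](t) = L(t,\hat y^\sigma(t),\hat y^\Delta(t)) - \lambda g(t,\hat y^\sigma(t),\hat y^\Delta(t)) = F(t,\hat y^\sigma(t),\hat y^\Delta(t))$, while $H_\nabla\{\hat y,\lambda\} \equiv 0$. Applying equation \eqref{iso:EL2} of Theorem~\ref{thm:mr} then yields, for all $t\in I^\kappa$,
\begin{equation*}
\partial_3 F(t,\hat y^\sigma(t),\hat y^\Delta(t))
- \int_a^t \partial_2 F(\tau,\hat y^\sigma(\tau),\hat y^\Delta(\tau))\,\Delta\tau = c
\end{equation*}
for some constant $c\in\mathbb{R}$ produced by the theorem.

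Finally, I would delta-differentiate this identity in $t$. Since the map $t\mapsto \int_a^t \partial_2 F(\tau,\hat y^\sigma,\hat y^\Delta)\,\Delta\tau$ is an indefinite delta integral of a continuous integrand, its delta derivative on $[a,b]^{\kappa^2}$ is $\partial_2 F(t,\hat y^\sigma(t),\hat y^\Delta(t))$, and the constant differentiates to zero; this gives exactly the delta Euler-Lagrange equation
\begin{equation*}
\frac{\Delta}{\Delta t}\bigl[\partial_3 F(t,\hat y^\sigma(t),\hat y^\Delta(t))\bigr]
- \partial_2 F(t,\hat y^\sigma(t),\hat y^\Delta(t)) = 0
\end{equation*}
for all $t\in [a,b]^{\kappa^2}$, as claimed.

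The only delicate step is matching the non-extremal hypothesis of the corollary with the normal extremizer hypothesis of Theorem~\ref{thm:mr}: one must verify that, with $k_2=0$ and $k_1=1$, the abstract extremal equations in Definition~\ref{nor:abnor} collapse to the usual statement that $\hat y$ is a delta extremal of $I$, so that the corollary's hypothesis is precisely what is needed to invoke the theorem. Everything else is routine bookkeeping.
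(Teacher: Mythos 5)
Your proposal is correct and follows essentially the same route as the paper, whose proof is precisely the specialization of Theorem~\ref{thm:mr} to $\gamma_1 = k_1 = 1$ and $\gamma_2 = k_2 = 0$; you merely spell out the substitution into \eqref{eq:H1} and \eqref{iso:EL2} and the final delta-differentiation, which the paper leaves implicit. The extra care you take in matching the non-extremal hypothesis with Definition~\ref{nor:abnor} under $k_2=0$ is a reasonable elaboration of the same argument, not a different one.
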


\begin{proof}
The result follows from Theorem~\ref{thm:mr} by considering
the particular case $\gamma_1 = k_1 = 1$ and $\gamma_2 = k_2 = 0$.
\end{proof}

One can easily cover abnormal extremizers within our result by
introducing an extra multiplier $\lambda_{0}$. Let
\begin{equation}
\label{eq:H2}
\begin{array}{l}
H_{\Delta}[\hat{y},\lambda_0,\lambda](t):=H_{\Delta}(t,y^{\sigma}(t),y^{\Delta}(t),\lambda_0,\lambda)
:=\Ga\lambda_0L_{\Delta}[\hat{y}](t)-k_1\lambda K_{\Delta}[\hat{y}](t)\\
H_{\nabla}\{\hat{y},\lambda_0,\lambda\}(t):=H_{\nabla}(t,y^{\rho}(t),y^{\nabla}(t),\lambda_0,\lambda)
:=\Gb\lambda_0L_{\nabla}\{\hat{y}\}(t)-k_2\lambda K_{\nabla}\{\hat{y}\}(t).
\end{array}
\end{equation}

\begin{theorem}[Necessary optimality conditions for normal and abnormal
extremizers of a delta-nabla isoperimetric problem]
\label{th:iso:abn}
If $\hat{y} \in C_{\diamond}^{1}(I,\R)$ is an extremizer to the isoperimetric problem
\eqref{problem:P:iso}--\eqref{const}, then there exist two constants
$\lambda_{0}$ and $\lambda$, not both zero, such that $\hat{y}$
satisfies the following delta-nabla integral equations:
\begin{multline}
\label{iso:EL1:abn}
\partial_3 H_{\Delta}[\hat{y},\lambda_0,\lambda](\rho(t))
+\partial_3 H_{\nabla}\{\hat{y},\lambda_0,\lambda\}(t)\\
-\int_{a}^{\rho(t)} \partial_2 H_{\Delta}[\hat{y},\lambda_0,\lambda](\tau) \Delta\tau
-\int_{a}^{t} \partial_2 H_{\nabla}\{\hat{y},\lambda_0,\lambda\}(\tau) \nabla\tau
= \text{const} \quad \forall t \in I_\kappa \, ;
\end{multline}
\begin{multline}
\label{iso:EL2:iso}
\partial_3 H_{\Delta}[\hat{y},\lambda_0,\lambda](t)
+ \partial_3 H_{\nabla}\{\hat{y},\lambda_0,\lambda\}(\sigma(t))\\
-\int_{a}^{t} \partial_2 H_{\Delta}[\hat{y},\lambda_0,\lambda](\tau) \Delta\tau
-\int_{a}^{\sigma(t)} \partial_2 H_{\nabla}\{\hat{y},\lambda_0,\lambda\}(\tau) \nabla\tau
= \text{const}\quad \forall t \in I^\kappa \, ,
\end{multline}
where $H_{\Delta}$ and $H_{\nabla}$ are defined by \eqref{eq:H2}.
\end{theorem}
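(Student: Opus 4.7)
The plan is to mimic the proof of Theorem~\ref{thm:mr} but replace the standard Lagrange multiplier rule (which required $\nabla\bar{K}(0,0)\neq\mathbf{0}$, i.e.\ normality) by the extended \emph{Fritz John} style multiplier rule, which yields constants $\lambda_0,\lambda$ not both zero without any constraint qualification. This single change is enough to absorb both the normal case (where $\lambda_0\neq 0$ and we may rescale to $\lambda_0 = 1$, recovering Theorem~\ref{thm:mr}) and the abnormal case (where $\lambda_0 = 0$ and the equation reduces to the Euler-Lagrange equation for $\mathcal{K}$ alone, consistent with Definition~\ref{nor:abnor}).

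First, I would set up exactly the same two-parameter family $\bar y = \hat y + \varepsilon_1\eta_1 + \varepsilon_2\eta_2$ with $\eta_i\in C^1_\diamond(I,\R)$, $\eta_i(a)=\eta_i(b)=0$, and form the two scalar maps $\bar L(\varepsilon_1,\varepsilon_2) = \mathcal{L}(\bar y)$ and $\bar K(\varepsilon_1,\varepsilon_2) = \mathcal{K}(\bar y) - k$. Since $\hat y$ is an extremizer of \eqref{problem:P:iso} among admissible functions, the point $(0,0)$ is a local extremum of $\bar L$ subject to $\bar K = 0$. Applying the extended Lagrange multiplier theorem, there exist $(\lambda_0,\lambda)\in\R^2\setminus\{(0,0)\}$ with
\begin{equation*}
\lambda_0\,\nabla\bar L(0,0) \;=\; \lambda\,\nabla\bar K(0,0).
\end{equation*}

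Next, I would compute $\partial\bar L/\partial\varepsilon_1$ and $\partial\bar K/\partial\varepsilon_1$ at $(0,0)$ using the same two integration-by-parts tricks applied in the proof of Theorem~\ref{thm:mr}: the first and third identities of \eqref{intBP} move $\Delta$ and $\nabla$ off the test function, producing, after using $\eta_1(a)=\eta_1(b)=0$, the expressions involving
\begin{equation*}
\partial_3 L_\Delta[\hat y](t) - \int_a^t \partial_2 L_\Delta[\hat y](\tau)\Delta\tau,
\qquad
\partial_3 L_\nabla\{\hat y\}(t) - \int_a^t \partial_2 L_\nabla\{\hat y\}(\tau)\nabla\tau,
\end{equation*}
and analogously for $K_\Delta,K_\nabla$. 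Multiplying the $L$-identity by $\lambda_0$, subtracting $\lambda$ times the $K$-identity, and recalling the definition \eqref{eq:H2} of $H_\Delta,H_\nabla$, the Lagrange relation becomes an integral identity of the form
\begin{equation*}
\gamma_1\!\int_a^b M(t)\,\eta_1^\Delta(t)\Delta t + \gamma_2\!\int_a^b N(t)\,\eta_1^\nabla(t)\nabla t \;=\; 0
\end{equation*}
for every admissible $\eta_1$, where $M$ and $N$ are built from the partials of $H_\Delta$ and $H_\nabla$ exactly as $m,n,f,g$ were built in the proof of Theorem~\ref{thm:mr}.

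Finally, I would convert both integrands to a common (nabla) integral using \eqref{eq:DtoN} together with the chain rule \eqref{eq:chgN_to_D}, as done in the previous proof, and invoke the nabla Dubois-Reymond Lemma~\ref{DBRL:n} to conclude \eqref{iso:EL1:abn} on $I_\kappa$. Equation \eqref{iso:EL2:iso} follows by the symmetric route: convert instead to a delta integral via \eqref{eq:NtoD} and \eqref{eq:chgD_to_N}, and apply the delta Dubois-Reymond Lemma~\ref{DBRL:d} on $I^\kappa$.

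The only delicate point, and the main conceptual step of the proof, is the invocation of the extended multiplier rule: in the normal proof, we used the implicit function theorem to choose $\varepsilon_2(\varepsilon_1)$ keeping $\bar K=0$, which required $\partial\bar K/\partial\varepsilon_2(0,0)\neq 0$. In the abnormal setting this is precisely what can fail, but the general Lagrange/Fritz John theorem still provides nontrivial multipliers $(\lambda_0,\lambda)$; the rest of the argument is mechanical and copies Theorem~\ref{thm:mr} verbatim.
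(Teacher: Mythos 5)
Your proposal is correct and follows essentially the same route as the paper: the authors likewise replace the implicit-function-theorem/normality step of Theorem~\ref{thm:mr} by the extended Lagrange multiplier rule (Theorem~4.1.3 of van Brunt), obtain the identity $\int_a^b \eta_1^\nabla(t)\{\lambda_0(\gamma_1 m^\rho(t)+\gamma_2 n(t))-\lambda(k_1 f^\rho(t)+k_2 g(t))\}\nabla t=0$, and conclude via the nabla Dubois-Reymond lemma, with the second equation by the symmetric delta argument. The only nit is notational: in your combined identity the factors $\gamma_1,\gamma_2$ should not appear in front once $M,N$ are built from $H_\Delta,H_\nabla$, since those already absorb $\gamma_1\lambda_0,\gamma_2\lambda_0$ and $k_1\lambda,k_2\lambda$.
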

\begin{proof}
Following the proof of Theorem~\ref{thm:mr}, since $(0,0)$ is an
extremal of $\bar{L}$ subject to the constraint $\bar{K}=0$, the
extended Lagrange multiplier rule (see for instance
Theorem~4.1.3 of [\citet{Brunt}]) asserts the existence of reals
$\lambda_{0}$ and $\lambda$, not both zero, such that
$\nabla(\lambda_{0}\bar{L}(0,0)-\lambda\bar{K}(0,0))=\textbf{0}$.
Therefore,
\begin{equation}
\label{iso:abn}
\int_{a}^{b}\eta_{1}^{\nabla}(t)\left\{\lambda_{0}\left(\Ga m^{\rho}(t)+\Gb n(t)\right)
-\lambda\left(k_1 f^{\rho}(t) + k_2 g(t)\right)\right\}\nabla t = 0.
\end{equation}
Since \eqref{iso:abn} holds for any $\eta_{1}$, by
Lemma~\ref{DBRL:n} we have
\begin{equation*}
\lambda_{0}\left(\Ga m^{\rho}(t)+\Gb n(t)\right)
-\lambda\left(k_1 f^{\rho}(t) + k_2 g(t)\right)=c
\end{equation*}
for some $c\in \mathbb{R}$ and all $t \in [a,b]_\kappa$. This
establishes equation \eqref{iso:EL1:abn}. Equation
\eqref{iso:EL2:iso} can be shown using a similar technique.
\end{proof}

\begin{remark}
If $\hat{y} \in C_{\diamond}^{1}(I,\R)$
is a normal extremizer to the isoperimetric problem
\eqref{problem:P:iso}--\eqref{const}, then we can choose $\lambda_{0}=1$
in Theorem~\ref{th:iso:abn} and obtain Theorem~\ref{thm:mr}. For
abnormal extremizers, Theorem~\ref{th:iso:abn} holds with
$\lambda_{0}=0$. The condition $(\lambda_{0},\lambda)\neq\textbf{0}$
guarantees that Theorem~\ref{th:iso:abn} is a useful necessary
optimality condition.
\end{remark}

\begin{example}(abnormal extremal)
\label{ex:iso:abnor}
Let $\T=\{1,3,4\}$ and consider the problem
\begin{gather}
\text{minimize }\ \g (y)=\int^4_1t\left(y^{\Delta}(t)\right)^2\Delta t\label{ex:5:ab}\\
y(1)=0,\ \ y(4)=1\label{ex:ab}
\end{gather}
subject to the constraint
\begin{equation}
\label{ex:4:ab}
\mathcal{K}(y)=\int^4_1t\left(y^{\nabla}(t)\right)^2\nabla t=\frac{12}{11}.
\end{equation}
Applying equation~\eqref{iso:EL1:abn} of Theorem~\ref{th:iso:abn} we
get the following delta-nabla differential equation:
\[
\lambda_0 2\rho(t)y^{\Delta}(\rho(t))-\lambda2ty^{\nabla}(t)=C,\ \
t\in\{3,4\} \, ,
\]
where $C\in\R$. By \eqref{eq:chgN_to_D} we can write the
above equation in the form
\begin{equation}\label{eq:iso1:ab}
\lambda_0 2\rho(t)y^{\nabla}(t)-\lambda2ty^{\nabla}(t)=C,\ \
t\in\{3,4\}.
\end{equation}
Substituting $t=3$ and $t=4$ into \eqref{eq:iso1:ab} we obtain
\begin{equation*}
\left\{
  \begin{array}{l}
    \lambda_0y(3)-3\lambda y(3)=C \\
    6\lambda_0(1-y(3))-8\lambda(1-y(3))=C.
  \end{array}
\right.
\end{equation*}
If we put $\lambda_0=1$, then the above system of equations has no
solutions. Therefore, we fix $\lambda_0=0$. In this case we obtain
\[y_0(t)=\begin{cases}
        0 & \text{if $t=1$}\\
        \frac{8}{11} & \text{if $t=3$}\\
        1 & \text{if $t=4$}
        \end{cases}\ \
        \]
as a candidate local minimizer to
problem~\eqref{ex:5:ab}--\eqref{ex:4:ab}. Observe that $y_0$ is an
extremal for $\mathcal{K}$. Indeed,
\begin{multline*}
\partial_3 K_\Delta[y_0](\rho(t))
-\int_{a}^{\rho(t)} \partial_2 K_\Delta[y_0](\tau) \Delta\tau \\
+ \partial_3 K_\nabla\{y_0\}(t) -\int_{a}^{t}
\partial_2 K_\nabla\{y_0\}(\tau) \nabla\tau
=2ty_0^\nabla(t)= \frac{24}{11},\quad t\in\{3,4\}.
\end{multline*}
\end{example}

As a particular case of Theorem~\ref{th:iso:abn} we obtain the main result of
[\citet{A:T}]:

\begin{corollary}[Necessary optimality condition for normal and abnormal
extremizers of a nabla isoperimetric problem -- \textrm{cf.} Theorem~2 of \citet{A:T}]
\label{thm:abn}
Let $\mathbb{T}$ be a time scale, $a,b \in \mathbb{T}$ with
$a < b$. If $\hat{y}$
is a local minimizer or maximizer to problem
\begin{gather*}
\text{extremize}\ \ \int_{a}^{b}f(t,y^\rho(t),y^\nabla(t))\nabla t\\
\int_{a}^{b}g(t,y^\rho(t),y^\nabla(t)) \nabla t =\Lambda \\
y(a)=\alpha\, , \quad y(b)=\beta
\end{gather*}
in the class of functions $y:[a,b] \to \mathbb{R}$ such that $y^{\nabla}$
exists and is continuous on $[a,b]_{\kappa}$,
then there exist two constants $\lambda_0$ and $\lambda$,
not both zero, such that
$$\frac{\nabla}{\nabla t} \left[\partial_3 G\left(t,\hat{y}^\rho(t),\hat{y}^\nabla(t)\right)\right]
-\partial_2 G \left(t,\hat{y}^\rho(t),\hat{y}^\nabla(t)\right)=0$$
for all $t \in [a,b]_{\kappa}$,
where $G(t,x,v)=\lambda_0 f(t,x,v)-\lambda g(t,x,v)$.
\end{corollary}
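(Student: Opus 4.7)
The plan is to derive this corollary as a direct specialization of Theorem~\ref{th:iso:abn}, by choosing the coefficients that suppress the delta part of the delta-nabla functional. Concretely, I would set $\gamma_1 = k_1 = 0$, $\gamma_2 = k_2 = 1$, $L_\nabla = f$, $K_\nabla = g$, $\alpha = \alpha$, $\beta = \beta$, and $k = \Lambda$, so that the functional \eqref{problem:P:iso} and constraint \eqref{const} reduce exactly to the purely nabla problem in the statement. Under this choice, $H_\Delta \equiv 0$ and $H_\nabla\{\hat{y},\lambda_0,\lambda\}(t) = \lambda_0 f(t,\hat{y}^\rho(t),\hat{y}^\nabla(t)) - \lambda g(t,\hat{y}^\rho(t),\hat{y}^\nabla(t)) = G(t,\hat{y}^\rho(t),\hat{y}^\nabla(t))$, with $(\lambda_0,\lambda)\neq \mathbf{0}$ guaranteed by Theorem~\ref{th:iso:abn}.

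Next, I would invoke equation \eqref{iso:EL1:abn}: with the delta contributions vanishing, it collapses to
\begin{equation*}
\partial_3 G(t,\hat{y}^\rho(t),\hat{y}^\nabla(t)) - \int_a^t \partial_2 G(\tau,\hat{y}^\rho(\tau),\hat{y}^\nabla(\tau))\,\nabla\tau = c
\end{equation*}
for some constant $c \in \mathbb{R}$ and all $t \in I_\kappa = [a,b]_\kappa$. Since the nabla derivative of a nabla indefinite integral recovers the integrand (a standard fact of the nabla calculus on time scales) and the nabla derivative of a constant is zero, applying $\nabla/\nabla t$ to both sides yields
\begin{equation*}
\frac{\nabla}{\nabla t}\bigl[\partial_3 G(t,\hat{y}^\rho(t),\hat{y}^\nabla(t))\bigr] - \partial_2 G(t,\hat{y}^\rho(t),\hat{y}^\nabla(t)) = 0
\end{equation*}
for every $t\in [a,b]_\kappa$, which is precisely the assertion of the corollary.

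There is no real obstacle here; the one point deserving care is that Theorem~\ref{th:iso:abn} is stated for functions in $C_{\diamond}^{1}(I,\R)$, which under the choice $\gamma_1 = k_1 = 0$ requires only $y^\nabla$ continuous on $I_\kappa$, matching exactly the regularity hypothesis of the corollary. I would remark that the same conclusion also follows by specializing equation \eqref{iso:EL2:iso} and transforming via \eqref{eq:chgD_to_N}, so that either of the two integral Euler-Lagrange equations in Theorem~\ref{th:iso:abn} suffices to recover the nabla Euler-Lagrange differential equation of \citet{A:T}.
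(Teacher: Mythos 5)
Your proposal is correct and follows exactly the paper's route: the paper proves this corollary by specializing Theorem~\ref{th:iso:abn} to $\gamma_1 = k_1 = 0$, $\gamma_2 = k_2 = 1$, which is precisely your choice. Your additional step of nabla-differentiating the integral equation \eqref{iso:EL1:abn} to reach the differential Euler--Lagrange form is the detail the paper leaves implicit, and it is carried out correctly.
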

\begin{proof}
The result follows from Theorem~\ref{th:iso:abn} by considering
the particular case $\gamma_1 = k_1 = 0$ and $\gamma_2 = k_2 = 1$.
\end{proof}

Other interesting corollaries are easily obtained from Theorem~\ref{th:iso:abn}:

\begin{corollary}{\rm (The delta-nabla Euler-Lagrange equations
on time scales [\citet{china-Xuzhou}]).}
\label{thm:mr1}
If $\hat{y} \in C_{\diamond}^{1}(I,\R)$
is a local extremizer to problem
\begin{gather*}
\text{extremize}\ \ \mathcal{L}(y) = \Ga\int_a^b L_{\Delta}[y](t) \Delta t +
\Gb\int_a^b L_{\nabla}\{y\}(t) \nabla t \\
y(a) = \alpha \, , \quad y(b) = \beta \,\\
y \in C_{\diamond}^{1}(I,\R)\, ,
\end{gather*}
then $\hat{y}$ satisfies
the following delta-nabla integral equations:
\begin{multline}
\label{eq:EL1}
\gamma_1
\left(\partial_3 L_\Delta[\hat{y}](\rho(t))
-\int_{a}^{\rho(t)} \partial_2 L_\Delta[\hat{y}](\tau) \Delta\tau\right)\\
+
\gamma_2\left(\partial_3 L_\nabla\{\hat{y}\}(t)
-\int_{a}^{t} \partial_2 L_\nabla\{\hat{y}\}(\tau) \nabla\tau\right)
= \text{const}
\end{multline}
for all $t \in I_\kappa$; and
\begin{multline*}
\gamma_1\left(\partial_3 L_\Delta[\hat{y}](t)
-\int_{a}^{t} \partial_2 L_\Delta[\hat{y}](\tau) \Delta\tau\right)\\
+
\gamma_2\left(\partial_3 L_\nabla\{\hat{y}\}(\sigma(t))
-\int_{a}^{\sigma(t)} \partial_2 L_\nabla\{\hat{y}\}(\tau) \nabla\tau\right) = \text{const}
\end{multline*}
for all $t \in I^\kappa$.
\end{corollary}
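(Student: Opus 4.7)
My plan is to derive this corollary by specializing the argument used in the proof of Theorem~\ref{thm:mr} to the unconstrained setting. Because there is no isoperimetric side condition, a single one-parameter variation suffices; the implicit function theorem step and the Lagrange multiplier rule can be dropped entirely.

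First, I would fix an arbitrary $\eta\in C_{\diamond}^{1}(I,\R)$ with $\eta(a)=\eta(b)=0$ and consider the family $\bar{y}=\hat{y}+\varepsilon\eta$ for $\varepsilon$ in a neighbourhood of zero small enough that $\|\bar{y}-\hat{y}\|_{1,\infty}<\delta$. Writing $\phi(\varepsilon):=\mathcal{L}(\bar{y})$, the local extremizer hypothesis forces $\phi'(0)=0$. Differentiating under each of the two integral signs produces an expression involving $\eta^{\sigma}$, $\eta^{\Delta}$, $\eta^{\rho}$, and $\eta^{\nabla}$, paired respectively with $\Ga\partial_{2}L_{\Delta}[\hat{y}]$, $\Ga\partial_{3}L_{\Delta}[\hat{y}]$, $\Gb\partial_{2}L_{\nabla}\{\hat{y}\}$, and $\Gb\partial_{3}L_{\nabla}\{\hat{y}\}$.

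Second, I would eliminate the $\eta^{\sigma}$ and $\eta^{\rho}$ contributions by means of the first and third integration by parts formulas of \eqref{intBP} — the same manipulation the authors perform to obtain \eqref{after:parts} in the proof of Theorem~\ref{thm:mr}. Using $\eta(a)=\eta(b)=0$ to kill the boundary terms recasts $\phi'(0)=0$ as
\[
\Ga\int_{a}^{b}m(t)\eta^{\Delta}(t)\Delta t+\Gb\int_{a}^{b}n(t)\eta^{\nabla}(t)\nabla t=0,
\]
where $m$ and $n$ are defined in perfect analogy with the $f$ and $g$ of the proof of Theorem~\ref{thm:mr}, but with $L$ in place of $K$.

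Third, I would unify the two integrals onto a common measure. Applying \eqref{eq:DtoN} together with \eqref{eq:chgN_to_D} converts the delta integral to a nabla integral and rewrites $\eta^{\Delta}$ as $\eta^{\nabla}$, yielding
\[
\int_{a}^{b}\bigl(\Ga m^{\rho}(t)+\Gb n(t)\bigr)\eta^{\nabla}(t)\nabla t=0
\]
for every admissible $\eta$. The nabla Dubois-Reymond Lemma~\ref{DBRL:n} then forces $\Ga m^{\rho}+\Gb n$ to be constant on $I_{\kappa}$, which on unpacking the definitions of $m$ and $n$ is exactly \eqref{eq:EL1}. The companion equation on $I^{\kappa}$ is obtained symmetrically: convert the nabla integral into a delta integral via \eqref{eq:NtoD} and \eqref{eq:chgD_to_N}, and then invoke Lemma~\ref{DBRL:d}. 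I do not anticipate any serious obstacle, since every technical ingredient is already available from the proof of Theorem~\ref{thm:mr} and the unconstrained case is strictly simpler; the only step demanding attention is bookkeeping the $\rho$- and $\sigma$-shifts so that the Dubois-Reymond constant is attached to the correct pointwise expression on the correct subset of $I$.
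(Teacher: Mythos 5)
Your argument is correct, but it is not the paper's route: the paper proves Corollary~\ref{thm:mr1} in one line, as the special case $k_1=k_2=k=0$ of Theorem~\ref{th:iso:abn}, whereas you re-run the variational machinery of Theorem~\ref{thm:mr} directly in the unconstrained setting --- a single one-parameter variation, integration by parts via \eqref{intBP}, unification of the two integrals through \eqref{eq:DtoN} and \eqref{eq:chgN_to_D} (respectively \eqref{eq:NtoD} and \eqref{eq:chgD_to_N}), and the Dubois-Reymond Lemmas~\ref{DBRL:n} and \ref{DBRL:d}. The trade-off is worth noting: the paper's derivation is shorter because it reuses the general isoperimetric theorem, but it is slightly delicate, since with $k_1=k_2=0$ the functional $\mathcal{K}$ is identically zero, so every admissible function is an extremal of $\mathcal{K}$, the normal-case Theorem~\ref{thm:mr} is unavailable, and Theorem~\ref{th:iso:abn} only guarantees multipliers $(\lambda_0,\lambda)\neq(0,0)$; one must still observe that $\lambda_0$ can be taken nonzero for the multiplier-free equations of the corollary to follow. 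Your direct proof sidesteps this degeneracy entirely and is self-contained (it is essentially the argument of the cited reference [\citet{china-Xuzhou}]). The only caveats, which you share with the paper's own proofs, are the implicit smoothness bookkeeping (differentiation under the integral sign, and $m^{\rho}$, $n^{\sigma}$ regular enough for the lemmas) and the fact that the Dubois-Reymond lemmas are invoked with test functions drawn from $C^{1}_{\diamond}(I,\R)$ rather than the full classes $C^{1}_{\textrm{ld}}$ and $C^{1}_{\textrm{rd}}$ appearing in Lemmas~\ref{DBRL:n} and \ref{DBRL:d}; since the paper uses exactly the same convention in the proof of Theorem~\ref{thm:mr}, neither point is a gap by the paper's standards.
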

\begin{proof}
The result follows from Theorem~\ref{th:iso:abn} by considering
the particular case $k_1 = k_2 = k = 0$,
for which the isoperimetric constraint \eqref{const} is trivially satisfied.
\end{proof}

% ---------------------------------------------------

\subsection{DELTA-NABLA BI-OBJECTIVE PROBLEMS}
\label{bi-objective}

We are now interested in studying the following bi-objective problem:
\begin{equation}\label{eq:13}
\text{minimize }\ F(y)=\left[
                      \begin{array}{l}
                        \g_\Delta(y)\\
                        \g_\nabla(y)
                      \end{array}
                    \right]
\end{equation}
with
\begin{equation*}
\begin{split}
\g_{\Delta}(y) &=\int_{a}^{b} L_{\Delta}(t,y^\sigma(t), y^{\Delta}(t))\Delta t=\int_{a}^{b}L_{\Delta}[y](t) \Delta t\, ,\\
\g_{\nabla}(y) &=\int_{a}^{b} L_{\nabla}(t,y^\rho(t), y^{\nabla}(t))\nabla t=\int_a^b L_\nabla\{y\}(t) \nabla t \, ,
\end{split}
\end{equation*}
and $y\in C^1_{\diamond}(I,\R)$, $y(a)=\alpha$, $y(b)=\beta$, $t\in I$.
A solution to this vector optimization problem
is understood in the Pareto sense.

\begin{definition}[locally Pareto optimal solution]
A function $\hat{y}\in C_{\diamond}^{1}(I,\R)$
is called \emph{a local Pareto optimal solution} if there exists
$\delta>0$ for which does not exist $y\in C_{\diamond}^{1}(I,\R)$
with $||\hat{y}-y||_{1,\infty}<\delta$ and
\begin{equation*}
\g_\Delta(y)\leq \g_\Delta(\hat{y})\ \
\wedge\ \ \g_\nabla(y)\leq\g_\nabla(\hat{y}),
\end{equation*}
where at least one of the above inequalities is strict.
\end{definition}

\begin{theorem}[Necessity]\label{th:nec Pareto}
If $\hat{y}$ is a local Pareto optimal solution to the
bi-objective problem~\eqref{eq:13}, then $\hat{y}$
is a minimizer to the isoperimetric problems
\begin{equation*}
\text{minimize}\ \ \g_\Delta(y)\ \ \text{subject to}\ \
\g_\nabla(y)=\g_\nabla(\hat{y})
\end{equation*}
and
\begin{equation*}
\text{minimize}\ \ \g_\nabla(y)\ \ \text{subject to}\ \
\g_\Delta(y)=\g_\Delta(\hat{y})
\end{equation*}
simultaneously.
\end{theorem}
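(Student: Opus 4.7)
The plan is to argue by contrapositive, using the classical equivalence between Pareto optimality and simultaneous scalar constrained minimization. Suppose $\hat{y}$ is a local Pareto optimal solution with neighborhood radius $\delta>0$ as in the definition, but that $\hat{y}$ fails to be a local minimizer of at least one of the two isoperimetric problems. Without loss of generality I assume the failure occurs for the first problem
\begin{equation*}
\text{minimize}\ \ \g_\Delta(y) \quad \text{subject to} \quad \g_\nabla(y)=\g_\nabla(\hat{y})
\end{equation*}
(the second case is handled in a completely symmetric fashion by swapping the roles of $\g_\Delta$ and $\g_\nabla$).

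Under this failure, for every $\varepsilon>0$ there exists an admissible $y\in C_{\diamond}^{1}(I,\R)$ with $y(a)=\alpha$, $y(b)=\beta$, $\|y-\hat{y}\|_{1,\infty}<\varepsilon$, $\g_\nabla(y)=\g_\nabla(\hat{y})$, and $\g_\Delta(y)<\g_\Delta(\hat{y})$. I would take $\varepsilon:=\delta$, producing such a $y$ with $\|y-\hat{y}\|_{1,\infty}<\delta$. Then the pair of inequalities
\begin{equation*}
\g_\Delta(y)<\g_\Delta(\hat{y}), \qquad \g_\nabla(y)\le \g_\nabla(\hat{y})
\end{equation*}
holds, with strict inequality in the first component. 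This is precisely the situation ruled out by local Pareto optimality of $\hat{y}$, yielding the desired contradiction.

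The argument is essentially a bookkeeping exercise; the main point is to align the neighborhood constants so that the competitor produced by the assumed failure of the isoperimetric problem lies inside the $\delta$-ball in which Pareto dominance is forbidden. The only subtlety I foresee is ensuring that the class of admissible competitors used to test isoperimetric minimality (namely those with the same boundary values and the fixed value of the other cost) is a subset of the class used to test Pareto optimality — but this is immediate because both definitions use the same function space $C_{\diamond}^{1}(I,\R)$, the same boundary data, and the same norm $\|\cdot\|_{1,\infty}$. Repeating the argument with $\Delta$ and $\nabla$ interchanged establishes minimality for the second isoperimetric problem, completing the proof.
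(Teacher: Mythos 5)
Your argument is correct and is essentially the same as the paper's: the paper simply delegates to the proof of Theorem~3.8 of Malinowska and Torres (2009b), which is exactly this standard dominance/contrapositive argument — a competitor for either isoperimetric problem inside the Pareto radius would satisfy one cost with equality and strictly decrease the other, contradicting local Pareto optimality. Your care in matching the neighborhood radii, the admissible class, and the norm is precisely the bookkeeping that proof requires, so nothing is missing.
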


\begin{proof}
A proof can be done similarly to the proof of
Theorem~3.8 in [\citet{AM:T}].
\end{proof}

\begin{example}
\label{ex:mobj1}
Let us consider $\T=\{1,3,4\}$ and the bi-objective
optimization problem \eqref{eq:13} with
\begin{equation}
\label{ex:pareto}
\begin{split}
\g_\Delta(y)&=\int^4_1t\left(y^\Delta(t)\right)^2\Delta t, \\
\g_\nabla(y)&=\int^4_1t\left(y^\nabla(t)\right)^2\nabla t.
\end{split}
\end{equation}
We pose the question of finding local Pareto optimal solutions
to \eqref{ex:pareto} under the boundary conditions
\begin{equation}
\label{ex:bc:pareto}
y(1)=0,\quad  y(4)=1.
\end{equation}
Let us consider the following function
\[\hat{y}(t)=\begin{cases}
        0 & \text{if $t=1$}\\
        \frac{9}{11} & \text{if $t=3$}\\
        1 & \text{if $t=4$.}
        \end{cases}\]
As it is shown in Example~\ref{example:iso},
$\hat{y}$ is, simultaneously, a candidate minimizer to the problem
\begin{gather*}
\text{minimize }\ \g_\nabla (y)=\int^4_1t\left(y^{\nabla}(t)\right)^2\nabla t \\
y(1)=0,\ \ y(4)=1
\end{gather*}
subject to
\begin{equation*}
\mathcal{L}_\Delta(y)=\int^4_1t\left(y^{\Delta}(t)\right)^2\Delta t=\frac{105}{242}.
\end{equation*}
and
\begin{gather*}
\text{minimize }\ \g_\Delta (y)=\int^4_1t\left(y^{\Delta}(t)\right)^2\Delta t \\
y(1)=0,\ \ y(4)=1
\end{gather*}
subject to
\begin{equation*}
\mathcal{L}_\nabla(y)=\int^4_1t\left(y^{\nabla}(t)\right)^2\nabla t=\frac{25}{22}\, .
\end{equation*}
According to Theorem~\ref{th:nec Pareto}, the function $\hat{y}$
is a candidate Pareto optimal solution
to the bi-objective problem \eqref{ex:pareto}--\eqref{ex:bc:pareto}.
\end{example}

Theorem~\ref{th:nec Pareto}
shows that necessary optimality conditions to
isoperimetric problems (see Section~\ref{isoperimetric}) are also
necessary to local Pareto optimality of a bi-objective variational
problem on time scales. Indeed, functional
\eqref{problem:P:iso} in particular cases when $\Ga=1$ and $\Gb=0$
or $\Ga=0$ and $\Gb=1$ is reduced  either to $\mathcal{L}(y)
= \g_\Delta(y)$ or to $\mathcal{L}(y) =\g_\nabla(y)$.

The next theorem asserts that sufficient conditions of optimality for scalar
optimal control problems are also sufficient conditions for Pareto optimality.

\begin{theorem}[Sufficiency]
\label{th:1}
A local minimizer $\hat{y}$ to the functional
$\gamma\g_\Delta(y)+(1-\gamma)\g_\nabla(y)$ with $\gamma\in(0,1)$
is a local Pareto optimal solution to the bi-objective problem \eqref{eq:13}.
\end{theorem}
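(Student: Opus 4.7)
My plan is to argue by contradiction, exploiting the fact that both weights $\gamma$ and $1-\gamma$ are strictly positive, so that a componentwise improvement in the bi-objective sense necessarily yields a strict improvement of any positively weighted scalarization.

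First I would fix the scalar weighted functional $\mathcal{L}_\gamma(y) := \gamma\,\mathcal{L}_\Delta(y) + (1-\gamma)\,\mathcal{L}_\nabla(y)$ and invoke the hypothesis to obtain $\delta > 0$ such that $\mathcal{L}_\gamma(\hat{y}) \leq \mathcal{L}_\gamma(y)$ for every admissible $y \in C^1_\diamond(I,\R)$ with $\|y-\hat{y}\|_{1,\infty} < \delta$. Next, I would suppose for contradiction that $\hat{y}$ fails to be a local Pareto optimal solution to \eqref{eq:13}. Applied with the same $\delta$, this negation produces some admissible $y$ with $\|y-\hat{y}\|_{1,\infty} < \delta$ satisfying $\mathcal{L}_\Delta(y) \leq \mathcal{L}_\Delta(\hat{y})$ and $\mathcal{L}_\nabla(y) \leq \mathcal{L}_\nabla(\hat{y})$, with at least one of these inequalities being strict.

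Then, since $\gamma > 0$ and $1-\gamma > 0$, multiplying the first inequality by $\gamma$ and the second by $1-\gamma$ and adding gives
\begin{equation*}
\gamma\,\mathcal{L}_\Delta(y) + (1-\gamma)\,\mathcal{L}_\nabla(y) < \gamma\,\mathcal{L}_\Delta(\hat{y}) + (1-\gamma)\,\mathcal{L}_\nabla(\hat{y}),
\end{equation*}
where the strict inequality persists because the strict component receives a strictly positive weight. Equivalently, $\mathcal{L}_\gamma(y) < \mathcal{L}_\gamma(\hat{y})$, which contradicts the choice of $\delta$ and finishes the proof.

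There is essentially no hard step here: the argument is the standard weighted-sum scalarization lemma from multi-objective optimization, and its validity rests only on $\gamma\in(0,1)$ so that both weights are strictly positive. The only point requiring mild care is matching the neighborhoods, namely using the same $\delta$ from the scalar local minimality hypothesis when invoking the negation of Pareto optimality; this is immediate because the Pareto definition quantifies over every positive radius. Note that the assumption $\gamma\in(0,1)$ (as opposed to $\gamma\in[0,1]$) is essential: if one weight vanishes, the strict inequality might sit on the component with zero weight and fail to transfer to $\mathcal{L}_\gamma$.
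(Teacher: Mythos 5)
Your argument is correct and is exactly the standard weighted-sum scalarization argument: since both weights $\gamma$ and $1-\gamma$ are strictly positive, a Pareto-improving comparison function within the same $\delta$-neighbourhood would strictly decrease $\gamma\mathcal{L}_\Delta+(1-\gamma)\mathcal{L}_\nabla$, contradicting local minimality. The paper itself gives no details, merely citing the analogous Theorem~3.7 of Malinowska and Torres (2009b), which rests on this same reasoning, so your proposal matches the intended approach and in fact supplies the omitted details (including the correct handling of the negated Pareto definition and the role of $\gamma\in(0,1)$).
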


\begin{proof}
A proof can be done similarly to the proof of Theorem~3.7 in [\citet{AM:T}].
\end{proof}

\begin{example}
\begin{description}
\item[a)]
Let us consider $\T=\{1,3,4\}$ and the bi-objective
optimization problem \eqref{eq:13} defined by
\begin{equation}\label{ex:pareto nec}
\begin{array}{c}
                \g_\Delta(y)=\int^4_1 t\left(y^\Delta(t)\right)^2\Delta t, \\
                \g_\nabla(y)=\int^4_1 t\left(y^\nabla(t)\right)^2\nabla t
              \end{array}
\end{equation}
subject to
\begin{equation}\label{ex:pareto:bc}
y(1)=0,\quad  y(4)=1.
\end{equation}
By Theorem~\ref{th:1} we can find Pareto optimal solutions to this
problem by considering the family of problems
\begin{gather*}
\min\ \ \gamma\g_\Delta(y)+(1-\gamma)\g_\nabla(y)\\
y(1)=0,\quad  y(4)=1,
\end{gather*}
where $\gamma\in(0,1)$. Using condition \eqref{eq:EL1} of
Corollary~\ref{thm:mr1} we get the following equation:
\begin{equation}
\label{fam:11}
2\gamma\rho(t)y^{\Delta}(\rho(t)) + 2(1-\gamma)ty^{\nabla}(t)
= c \quad \forall t \in \{3,4\}
\end{equation}
for some $c\in \R$. Substituting $t=3$ and $t=4$ into \eqref{fam:11}, we obtain
\begin{gather*}
\gamma y(3)+3(1-\gamma)y(3) = c,\\
6\gamma(1-y(3))+8(1-\gamma)(1-y(3)) = c\, ,
\end{gather*}
and from this we have $y(3)=\frac{8-2\gamma}{11-4\gamma}$, $\gamma\in (0,1)$.
Since $L_\Delta(\cdot,\cdot,\cdot)$ and $L_\nabla(\cdot,\cdot,\cdot)$
are jointly convex with respect to the second and third
argument for any $t\in \T$, the local Pareto
optimal solutions to problem \eqref{ex:pareto nec}--\eqref{ex:pareto:bc} are
\begin{equation*}
\begin{array}{c}
  y(t)=\left\{
         \begin{array}{lll}
           0, & \hbox{if $t=1$} \\
           k, & \hbox{if $t=3$,} & k\in \left(\frac{8}{11}, \frac{6}{7}\right).\\
           1, & \hbox{if $t=4$,}
         \end{array}
       \right.
\end{array}
\end{equation*}
  \item[b)] Let us consider $\T=\{0,1,2\}$ and the bi-objective
problem \eqref{eq:13} defined by
\begin{equation}\label{ex:pr}
\begin{array}{c}
                \g_\Delta(y)=\int^2_0\left(y^\sigma(t)\right)^2\Delta t, \\
                \g_\nabla(y)=\int^2_0\left(y^\rho(t)-3\right)^2\nabla
                t
              \end{array}
\end{equation}
subject to
\begin{equation}\label{ex:bc}
y(0)=0,\quad  y(2)=0.
\end{equation}
By Theorem~\ref{th:1} we can find Pareto optimal solutions to this
problem by considering the family of problems
\begin{gather*}
min\ \ \gamma\g_\Delta(y)+(1-\gamma)\g_\nabla(y)\\
y(0)=0,\quad  y(2)=0,
\end{gather*}
where $\gamma\in(0,1)$. Using condition \eqref{eq:EL1} of
Corollary~\ref{thm:mr1} we get the following equation:
\begin{equation}
\label{fam:1}
\gamma \int_{0}^{\rho(t)} y^\sigma(\tau) \Delta\tau
+(1-\gamma) \int_{0}^{t} (y^\rho(\tau)-3) \nabla\tau
= c \quad \forall t \in \{1,2\}
\end{equation}
for some $c\in \R$. Substituting $t=1$ and $t=2$
into \eqref{fam:1} we obtain
\begin{equation*}
\begin{split}
\gamma\int_{0}^{0} y^\sigma(\tau) \Delta\tau
+(1-\gamma)\int_{0}^{1} (y^\rho(\tau)-3) \nabla\tau = c,\\
\gamma\int_{0}^{1} y^\sigma(\tau) \Delta\tau
+(1-\gamma)\int_{0}^{2} (y^\rho(\tau)-3) \nabla\tau = c\, ,
\end{split}
\end{equation*}
and from this we have $y(1)=3-3\gamma$, $\gamma \in (0,1)$. Since
$L_\Delta(\cdot,\cdot,\cdot)$ and $L_\nabla(\cdot,\cdot,\cdot)$
are jointly convex with respect to the second and third
argument for any $t\in \T$, the local Pareto
optimal solutions to problem \eqref{ex:pr}--\eqref{ex:bc} are
\begin{equation*}
\begin{array}{c}
  y(t)=\left\{
         \begin{array}{lll}
           0, & \hbox{if $t=0$} \\
           k, & \hbox{if $t=1$,} & k \in (0,3).\\
           0, & \hbox{if $t=2$,}
         \end{array}
       \right.
\end{array}
\end{equation*}
\end{description}
\end{example}

%----------------------------------------------------

\bigskip

\emph{Acknowledgments.} {\it The authors would like to express
their gratitude to two anonymous referees, for several relevant
and stimulating remarks contributing to improve the quality of the paper.}

%-----------------------------------------------------

{\small

}

% --------------------------------------


\begin{thebibliography}{xx}

\bibitem[Agarwal et al.(2002)]{A:B:O:P:02}
Agarwal, R., Bohner, M., O'Regan, D. and Peterson, A., 2002,
``Dynamic equations on time scales: a survey,''
{\it Journal of Computational and Applied Mathematics}
{\bf 141(1-2)}, 1--26.

\bibitem[Almeida and Torres(2009)]{A:T}
Almeida, R. and Torres, D. F. M., 2009,
``Isoperimetric problems on time scales with nabla derivatives,''
{\it Journal of Vibration and Control} {\bf 15(6)}, 951--958.
{\tt arXiv:0811.3650}

\bibitem[Almeida and Torres(2009b)]{A:T:2}
Almeida, R. and Torres, D. F. M., 2009b,
``H\"olderian variational problems subject to integral constraints,''
{\it J. Math. Anal. Appl.} {\bf 359(2)}, 674--681.
{\tt arXiv:0807.3076}

\bibitem[Atici et al.(2006)]{A:B:L:06}
Atici, F. M., Biles, D. C. and Lebedinsky, A., 2006,
``An application of time scales to economics,''
{\it Mathematical and Computer Modelling} {\bf 43(7-8)}, 718--726.

\bibitem[Atici and Guseinov(2002)]{A:G:02}
Atici, F. M. and Guseinov, G. Sh., 2002,
``On Green's functions and positive solutions
for boundary value problems on time scales,''
{\it Journal of Computational and Applied Mathematics} {\bf 141(1-2)}, 75--99.

\bibitem[Atici and Uysal(2008)]{A:U:08}
Atici, F. M. and Uysal, F., 2008,
``A production-inventory model of HMMS on time scales,''
{\it Applied Mathematics Letters} {\bf 21(3)}, 236--243.

\bibitem[Aulbach and Hilger(1990)]{A:H}
Aulbach, B. and Hilger, S., 1990,
``A unified approach to continuous and
discrete dynamics,'' in {\it Qualitative theory of differential equations
(Szeged, 1988)}, Colloq. Math. Soc. J\'{a}nos Bolyai {\bf 53},
North-Holland, Amsterdam, pp.~37--56.

\bibitem[Bartosiewicz and Torres(2008)]{B:T:08}
Bartosiewicz, Z. and Torres, D. F. M., 2008,
``Noether's theorem on time scales,''
{\it Journal of Mathematical Analysis and Applications}
{\bf 342(2)}, 1220--1226.
{\tt arXiv:0709.0400}

\bibitem[Bohner(2004)]{B:04}
Bohner, M., 2004,
``Calculus of variations on time scales,''
{\it Dynamic Systems and Applications}
{\bf 13(3-4)}, 339--349.

\bibitem[Bohner et al.(2010)]{des:ts:cv}
Bohner, M., Ferreira, R. A. C. and Torres, D. F. M., 2010,
``Integral inequalities and their applications
to the calculus of variations on time scales,''
{\it Mathematical Inequalities \& Applications}
{\bf 13(3)}, 511--522.
{\tt arXiv:1001.3762}

\bibitem[Bohner and A. Peterson(2001)]{B:P:01}
Bohner, M. and Peterson, A., 2001,
{\it Dynamic equations on time scales},
Birkh\"{a}user Boston, Boston, MA.

\bibitem[Bohner and A. Peterson(2003)]{B:P:03}
Bohner, M. and Peterson, A., 2003,
{\it Advances in dynamic equations on time scales},
Birkh\"{a}user Boston, Boston, MA.

\bibitem[Curtis(2004)]{Curtis}
Curtis, J. P., 2004,
``Complementary extremum principles for isoperimetric optimization problems,''
{\it Optimization and Engineering} {\bf 5(4)}, 417--430.

\bibitem[Ferreira and Torres(2008)]{F:T:08}
Ferreira, R. A. C. and Torres, D. F. M., 2008,
``Higher-order calculus of variations on time scales,''
in {\it Mathematical control theory and finance},
Springer, Berlin, pp.~149--159.
{\tt arXiv:0706.3141}

\bibitem[Ferreira and Torres(2010)]{F:T:10}
Ferreira, R. A. C. and Torres, D. F. M., 2010,
``Isoperimetric problems of the calculus
of variations on time scales,''
in {\it Nonlinear Analysis and Optimization} II, Contemporary Mathematics,
vol.~514, Amer. Math. Soc., Providence, RI, 2010, pp.~123--131.
{\tt arXiv:0805.0278}

\bibitem[Girejko et al.(2010)]{china-Xuzhou}
Girejko, E., Malinowska, A. B. and Torres, D. F. M., 2010,
``A unified approach to the calculus of variations on time scales,''
Proceedings of 2010 CCDC, Xuzhou, China, May 26-28, 2010.
In: IEEE Catalog Number CFP1051D-CDR, 2010, 595--600.
{\tt arXiv:1005.4581}

\bibitem[G\"{u}rses et al.(2005)]{G:G:S:05}
G\"{u}rses, M., Guseinov, G. Sh. and Silindir, B., 2005,
``Integrable equations on time scales,''
{\it Journal of Mathematical Physics}
{\bf 46(11)}, 113510, 22pp.

\bibitem[Hilger(1990)]{SH}
Hilger, S., 1990,
``Analysis on measure chains---a unified approach to
continuous and discrete calculus,''
{\it Results in Mathematics} {\bf 18(1-2)}, 18--56.

\bibitem[Lakshmikantham et al.(1996)]{Lak:book}
Lakshmikantham, V., Sivasundaram, S. and Kaymakcalan, B., 1996,
{\it Dynamic systems on measure chains. Mathematics and its Applications},
Kluwer Academic Publishers Group, Dordrecht.

\bibitem[Malinowska et al.(2010)]{comNatyBasia:infHorizon}
Malinowska, A. B., Martins, N. and Torres, D. F. M., 2010,
``Transversality conditions for infinite horizon
variational problems on time scales,''
{\it Optimization Letters}, in press.
DOI: 10.1007/s11590-010-0189-7
{\tt arXiv:1003.3931}

\bibitem[Malinowska and Torres(2009)]{AM:T:W}
Malinowska, A. B. and Torres, D. F. M., 2009,
``Strong minimizers of the calculus of variations
on time scales and the Weierstrass condition,''
{\it Proceedings of the Estonian Academy of Sciences}
{\bf 58(4)}, 205--212.
{\tt arXiv:0905.1870}

\bibitem[Malinowska and Torres(2009b)]{AM:T}
Malinowska, A. B. and Torres, D. F. M., 2009b,
``Necessary and sufficient conditions for local Pareto optimality on time scales,''
{\it Journal of Mathematical Sciences (New York)} {\bf 161(6)}, 803--810.
{\tt arXiv:0801.2123}

\bibitem[Malinowska and Torres(2010)]{Bedlewo:2009}
Malinowska, A. B. and Torres, D. F. M., 2010,
``The delta-nabla calculus of variations,''
{\it Fasciculi Mathematici} {\bf 44}, 75--83.
{\tt arXiv:0912.0494}

\bibitem[Malinowska and Torres(2010b)]{Basia:post_doc_Aveiro:2}
Malinowska, A. B. and Torres, D. F. M., 2010b,
``Natural boundary conditions in the calculus of variations,''
{\it Mathematical Methods in the Applied Sciences}, in press.
DOI: 10.1002/mma.1289
{\tt arXiv:0812.0705}

\bibitem[Malinowska and Torres(2010c)]{MyID:183}
Malinowska, A. B. and Torres, D. F. M., 2010c,
``Leitmann's direct method of optimization
for absolute extrema of certain problems
of the calculus of variations on time scales,''
{\it Applied Mathematics and Computation},
in press. DOI: 10.1016/j.amc.2010.01.015
{\tt arXiv:1001.1455}

\bibitem[Martins and Torres(2009)]{NM:T}
Martins, N. and Torres, D. F. M., 2009,
``Calculus of variations on time scales with nabla derivatives,''
{\it Nonlinear Analysis Series A: Theory, Methods \& Applications}
{\bf 71(12)}, e763--e773.
{\tt arXiv:0807.2596}

\bibitem[Seiffertt et al.(2008)]{SSW}
Seiffertt, J., Sanyal, S. and Wunsch, D. C., 2008,
``Hamilton-Jacobi-Bellmam equations and approximate dynamic programming on time scales,''
{\it IEEE Transactions on Systems, Man, and Cybernetics---Part B: Cybernetics}
{\bf 38(4)}, 918--923.

\bibitem[van Brunt(2004)]{Brunt}
van Brunt, B., 2004, {\it The calculus of variations},
Universitext, Springer-Verlag, New York.

\end{thebibliography}
\end{document}